\newcommand{\ba}{\begin{array}}
\newcommand{\ea}{\end{array}}
\newcommand{\bae}{\begin{eqnarray}}
\newcommand{\eae}{\end{eqnarray}}
\newcommand{\bea}{\begin{eqnarray*}}
\newcommand{\eea}{\end{eqnarray*}}
\newcommand{\be}{\begin{equation}}
\newcommand{\ee}{\end{equation}}
\newcommand{\pr}{{\bf Proof}~~}
\newtheorem{theorem}{\hskip\parindent\bf Theorem}[section]
\newtheorem{lemma}{\hskip\parindent\bf Lemma}[section]
\newtheorem{proposition}{\bf Proposition}[section]
\newtheorem{corollary}{\hskip\parindent\bf Corollary}[section]
\begin{document}
 \markboth{modeling on Ant-Fungus interactions}{}
\title{Mathematical Modeling on Obligate Mutualism: Interactions between leaf-cutter ants and their fungus garden}
\author{Yun Kang\footnote{Applied Sciences and Mathematics, Arizona State University, Mesa,
AZ 85212, USA. E-mail: yun.kang@asu.edu}, Michael Makiyama\footnote{Ira A Fulton Engineering, Arizona State University, Tempe, AZ 85287-1804, USA. E-mail: Michael.Makiyama@asu.edu}, Rebecca Clark\footnote{School of Life Sciences, Arizona State University, Tempe, AZ 85287-1804, USA. E-mail: rebecca.m.clark@asu.edu}, Jennifer Fewell\footnote{School of Life Sciences, Arizona State University, Tempe, AZ 85287-1804, USA. E-mail: j.fewell@asu.edu}}
\begin{abstract}
We propose a simple mathematical model by applying Michaelis-Menton equations of enzyme kinetics to study the mutualistic interaction between the leaf cutter ant and its fungus garden at the early stage of colony expansion. We derive the sufficient conditions on the extinction and coexistence of these two species. In addition, we give a region of initial condition that leads to the extinction of two species when the model has an interior attractor. Our global analysis indicates that the division of labor by workers ants and initial conditions are two important {factors} that determine whether leaf cutter ants colonies and their fungus garden survive and grow can exist or not. We validate the model by doing the comparing between model simulations and data on fungal and ant colony growth rates under laboratory conditions. We perform sensitive analysis and parameter estimation of the model based on the experimental data to gain more biological insights on the ecological interactions between leaf cutter ants and their fungus garden. Finally, we give conclusions and {discuss} potential future {work}.

\end{abstract}
\bigskip
\begin{keyword}Obligate Mutualism, Leaf-cutter ants, Garden Fungus, Functional/Numerical Response, Population Dynamics, Coexistence, Extinction, Multiple Attractors, Division of Labor, Parameter Estimation, Sensitive Analysis\end{keyword}
\maketitle
\section{Introduction}


Mutualistic interactions, although ubiquitous in nature, are not well understood theoretically (Boucher 1985; Herre and Bruna 1999; Hoeksema and Schwartz 2000; Holland and DeAngelis 2002; Neuhauser and  Fargione 2004). Mathematical modeling of mutualisms that {correspond} well with natural observed population dynamics have historically been difficult to formulate (Heithaus, Culver and Beattle 1980), especially for obligate mutualisms. While there are many obligate {mutualisms} found in nature, e.g., senita cacti and senita moths, coral and zooxanthellae, plant and mycorrhizal fungi, termite and protozoa  (Holland and DeAngelis 2010), there is little work done in mathematical modeling  of this topic.  Many two-species mutualism models come from modified Lotka-Volterra equations, which have been applied to a variety of ecological interactions, such as plant-pollinator interactions (Soberon and Martinez del Rio 1981; Wells 1983; Holland and DeAngelis 2002) and legume rhizobium interactions (Vandermeer and Boucher 1978; Simms and Taylor 2002; West \emph{et al.} 2002). However, these models predict unstable population dynamics that do not match the dynamics actually observed in nature (Holland and DeAngelis 2010). Recently, Holland and DeAngelis (2010) proposed a general framework using a consumer-resource approach to model the density-dependent population dynamics of mutualism. In this article, we apply Holland and DeAngelis' approach to derive a new and simple mathematical model of the population dynamics of leaf cutter ants and their fungus garden at the early colony stage, that can be validated by the experimental data. This model is unique because it includes the division of labor within the colony and the particular colony stage; incorporation of these behavioral and life-history components make it distinct from other obligate mutualism models such as plant-pollinator interactions.

This article {focuses on a species of leaf-cutter ants; these ants are fungus farmers that} harvest leaves and use them to cultivate their own food, a type of fungus, in underground gardens. The fungus that is grown by the adults feeds the ants' larvae, and {the ants feed off of the fruit of fungus}. Thus, leaf-cutter ants and their fungus garden form an obligate mutualistic relationship, in which the increasing population of ants is due to consumption of the fungus while the increasing population of fungus is due to the agricultural services provided by the ants. The interaction between ants and fungus can be categorized {as a consumer-resource} mutualism according to Holland and DeAngelis's study (2010).

Leaf-cutter ants of the genera \emph{Atta} and \emph{Acromyrmex} are among the most prevalent herbivores of the Neotropics, consuming far more vegetation than any other group of animals with comparable taxonomic diversity (Fowler \emph{et al.} 1989; H\"olldobler and Wilson 1990; Vieira-Neto and Vasconcelos 2010). In addition to their impact on plant communities, leaf-cutter ants can alter the spatio-temporal dynamics of carbon stocks, nutrient availability, susceptibility to fire, and other ecosystem properties through transferring tons of plant biomass below ground (Costa \emph{et al.} 2008). Many leaf-cutter ant species increase in abundance following natural or anthropogenic disturbances in vegetation (reviewed in Wirth \emph{et al.} 2008). However, even though the harvesting behavior of ants is well-characterized, how this translates into the growth and production efficiency of the fungus and ant population is not well understood (Wirth 2003). A mathematical treatment of the population dynamics between leaf-cutter ants and their fungus garden can be a first approach to explore these mechanisms.

It is important to note that multiple mechanisms can potentially affect the performance of individuals {and} ultimately, regulate natural populations (Murdoch 1994), and that different mechanisms may operate at different life stages (Wilbur 1980; Jonsson and Ebenman 2001; Vieira-Neto and Vasconcelos 2010). For instance, colonies of most ant species are founded by solitary queens and must pass through a series of developmental stages before reaching maturity (reviewed in {H\"olldobler} and Wilson 1990). Mortality is typically much higher for colonies in the early stages and therefore the forces acting on these colonies are likely to differ from those affecting older colonies (Wetterer 1994; Cole 2009). Studies examining variation across different stages of colony development are relatively rare and most have focused on ontogenetic changes in {individual ant} morphology and behavior (Tschinkel 1988; Wetterer 1994) rather than on factors affecting colony survival or growth. {In this article we focus on the ergonomic growth stage starting from {when} the first brood of workers reaches the adult stage}. We study the population dynamics of leaf-cutter ants and their fungus garden during this critical stage by using a simple mathematical model to explore potential sources of colony failure and specific interactions that lead to different benefits and costs for colony growth.

The model is unique also in that it incorporates behavioral effects, by considering the role of division of labor in colony growth, in particular the allocation of workers to different tasks that {can} positively or negatively affect fungal growth. Division of labor is one of the most basic and widely studied aspects of colony behavior in social insects, and addresses how individual worker behavior integrates into colony-level task organization (H\"olldobler and Wilson 1990; {Beshers and Fewell 2001}). However, models of division of labor have not previously been integrated with effects on colony growth. In this paper, we adapted a model for incipient colony growth based on simple density-dependant ant growth and death rates coupled with a fungus growth model which can be described by a generalized Michaelis-Menton equations of enzyme kinetics model. The main {purposes} of this article are three-fold:
\begin{enumerate}
\item Model functional response/numerical response based on ecological properties of {leaf} cutter ants and their fungus garden.
\item Explore how the division of labor and initial conditions can be key factors that determine the successful colony expansion at its early stage. 
\item Validate the model with experimental data and perform parameter estimations and sensitivity analysis to understand the effects of parameters and initial condition on the model outcomes.
\end{enumerate}


The rest of this article {is} organized as follows: In section 2, we introduce the biological background of leaf-cutter ants and their fungus garden, and we formulate a simple mathematical model based on ecological assumptions that are supported by data and literature. In section 3, we perform mathematical {analyses} of the proposed model: We derive the sufficient conditions for the extinction and coexistence of the two species and give a region of initial conditions that leads to the extinction of ants and fungus with a model also containing an interior attractor (Theorem \ref{th1:extinction} and \ref{th2:localstability}). These global {analyses indicate} that the division of labor by ants and initial conditions are two important {factors} {in determining} whether leaf cutter ants and their fungus garden can coexist or not (Theorem \ref{th3:basinattraction} and Corollary \ref{co:two-attractors}). In section 4, we compare simulations to data on growth rates for laboratory leaf cutter ant colonies, perform sensitivity analysis and parameter estimations for all the parameters and initial {conditions} around the nominal value. The study suggests not only that the fit of our model to data is significantly {accurate,} but also {that} the model can provide parameter values that are difficult to measure in the experiments.  In the last section, we summarize our results and discuss future work.

\section{Biological Background and model formulation}\label{sec:model}
The life cycle of the leaf-cutter ant consists of four stages: egg, larva, pupa, and adult. Fertilized eggs produce female ants ({queens and  workers}); unfertilized eggs produce male ants. The period from egg through adult stage usually lasts from 6 to 10 weeks. The average worker leaf-cutter ant lives from 4-6 months in the lab (Howard, Henneman,Cronin, Fox and Hormig 1996), while the life span of {workers} in the field is shorter and varies by season. The queen is the largest of the ants and her life span can be 10 years or more (Smith and Read 1997). 

Incipient colonies are started when mature colonies produce new winged {females}, {queens, which will mate with males} and then travel to a new location to start a new colony. They bring a small piece of fungus with them to establish a {new garden} in their new colony. After mating, queens construct nests by excavating in the soil.  Within a day of nest initiation, the queen lays lays eggs, and uses her body's energy reserves to single-handedly raise her first brood, which consists entirely of workers. The first workers emerge 6-9 weeks after the eggs are laid (H\"olldobler and Wilson 1994). These workers enlarge the nest, feed the {queen and} larvae, tend to the fungus garden and hunt for leaves for the fungus substrate. Once the first workers emerge, the 
{queen�s} only task is to produce eggs. This transition begins the ergonomic growth phase of the colony, which may last for years until the colony matures. This growth stage is a critical time in the life of the colony, because the number of workers is low relative to the number of tasks and total work effort needed for colony growth (Brown, Bot and Hart 2006).

Our study in this section aims to model the obligate mutualism interaction between {leaf cutter ants and fungus at the incipient stage of the} colony: From the time of first brood production to about 29 weeks of age. {During these early weeks, workers must allocate effort across a number of different tasks for colonies to survive and grow. The workers of established colonies, depending on maturity, may perform 20-30 tasks (Wilson 1983). Incipient colonies perform considerably fewer; of {these}, feeding fungus to the larvae, tending the fungus garden, and collecting leaves for the fungus {substrate require} the bulk of worker activity budgets, which translate {into energy expenditure} of workers (Julian and Fewell 2004; Fewell unpublished data ). These tasks can be generalized into two categories; time spent outside the colony for collecting and processing leaves and time spent inside the colony for tending and cleaning the fungus garden and taking care of queens and larvae.  For simplicity of modeling, we quantify time expenditure for these tasks as the biomass of ant.} 

Let $A(t)$ be the total biomass of ants including workers, larvae, pupae and eggs at time $t$, where $p A$ ( $0<p<1$) is the biomass of workers and $(1-p)A$ is the biomass of the remaining ants. Let $F(t)$ be the total biomass of the fungus at time $t$. Ecological assumptions of the interaction between workers and fungus at the early stage of colony are as follows:
\begin{description}
\item \textbf{A1:}  Assume that each worker has a fixed ratio of the energy spent outside the colony to the energy spent inside the colony which is $\frac{q}{1-q}$. This assumption is equivalent to a situation where workers with a population of $q pA$ ($0<q<1$) collect leaves, and the rest of workers $(1-q) p A$ tend the fungus garden and take care of queen ants as well as larvae.
\item \textbf{A2:} The ants' population increases as the queen, larvae and adult ants feed on fungus. Thus we can assume that the numerical response function for ants is the Holling Type I function, i.e., fungus biomass $F$ multiplied by a constant number $r_a$. In addition, we assume that ants suffer from density-dependent mortality due to energy consumed by foraging for leaves and taking care of the larvae and fungus garden, which will modify population growth through density-dependent self-limitation (Holland and DeAngelis 2010). Therefore, the population dynamics of ants can be described {as} follows:
\bae\label{pa}
\frac{dA}{dt}&=&\left(r_a F-d_a A\right)A
\eae where $r_a$ is a parameter that measures the maximum growth rate of ants and $d_a$ is the mortality rate of ants. 
\item \textbf{A3:} The leaf-cutter ant mutualism is unique because the workers perform {specific} tasks to maintain the life of the fungus. The population of fungus can increase only if: there are $q pA$ workers bringing back and processing leaves for the fungus; there are $(1-q) p A$ workers taking care of the fungus garden and there is healthy fungus $F$ in the garden. Thus, fungus growth is a product of two different sets of tasks performed by workers which can be represented by the following diagram:

$$ \underbrace{q\, p\,A}_{\text{energy from workers collecting and processing leaves}}+\underbrace{(1-q) \,p \,A}_{\text{energy from workers tending fungus }}+ F\rightarrow F\,+\, \mbox{new fungus}$$
Therefore, by applying the concept of the kinetics of functional response (Real 1977) we can assume that the numerical response of fungus to ants is a {Holling} Type III function $$\frac{p^2q (1-q) A^2}{b+ p^2q (1-q) A^2}$$ where $b$ is the half-saturation constant. The population of fungus decreases due {to} the consumption by ants and its mortality. Here, we assume that the fungus suffers from density-dependent mortality due to self-limiting (Holland and DeAngelis 2010). Thus, the population dynamics of fungus can be described by follows:
\bae\label{pf}
\frac{dF}{dt}&=&\left(\frac{r_f p^2q (1-q) A^2}{b+ p^2q (1-q) A^2}-d_f F- r_a c A\right)F
\eae where $r_f$ is the maximum growth rate of {the} fungus; $c$ is the conversion rate between fungus and ants and $d_f$ is the mortality rate of {the} fungus. 
See Table \ref{tab:parameters} for the biological meanings of the completed list of parameters.
\begin{table}[ht]
\begin{center}
\caption{\upshape{Biological meanings of parameters in the system \eqref{pa}-\eqref{pf}}}\label{tab:parameters}\vspace{15pt}
\begin{tabular}{|c|l|}\hline
 Parameters&Biological Meaning\\\hline
 $r_a$ &  Maximum growth rate of ants\\\hline
 $r_f$&  Maximum growth rate of fungus\\\hline
 $c$&  Conversion rate between fungus and ants \\\hline
 $d_a$& Death rate of ants \\\hline
 $d_f$& Death rate of fungus \\\hline
 $p$&Proportion of ants that are workers\\\hline
  $q$&Proportion of workers that take care of fungus\\\hline
 $b$&Half-saturation constant\\\hline\end{tabular}
\end{center}
\end{table}

\noindent \textbf{Remark:} Holling (1959) and Murdoch (1969) have discussed the application of a type III functional response associated with learning processes of predators being able to adjust their feeding rate actively based on the quantity and density of available prey. {An ant} colony similarly changes allocation of workers to different tasks as needs change with changing colony size  (H\"olldobler and Wilson 1990). In this way the ants actively modify their time expenditure and task allocation just as predators actively modify their feeding behaviors. 
\end{description}



\section{Mathematical Analysis}
Let $a=p^2q (1-q)$ and  $r_c= c r_a$, then based on the assumptions listed in Section 2, an interaction between ants and fungus at the early stage of colony may be modeled by the following differential equation:
\bae\label{pa1}
\frac{dA}{dt}&=&\left(r_a F-d_a A\right)A\\
\label{pf1}
\frac{dF}{dt}&=&\left(\frac{r_f a A^2}{b+ a A^2}-d_f F- r_c A\right)F
\eae where $a$ can be considered as a parameter measuring the division of labor in the colony of ants and other parameters are strictly positive. Since
$$a\,=\,p^2\,q\, (1-q), \,\,p\in [0,1] \,\mbox{ and } \,\,q\in [0,1],$$ {therefore,} $$a\in [0, 0.25]$$ where $a$ achieves its maximum 0.25 when $p=1, q=1/2$. In reality, $p$ is always less than 1 since the biomass of queen and other stages of ants (i.e., larvae, pupae) other than adult ants is greater than 0. Thus, $a$ is strictly less than 0.25 in the real biological system.

\begin{lemma}\label{l:pb}The system \eqref{pa1}-\eqref{pf1} is positively invariant and bounded in $\mathbb R^2_+$. In particular, if both $A(0)>0$ and $F(0)>0$, then $A(t)>0$ and $F(t)>0$ for all $t>0$.
\end{lemma}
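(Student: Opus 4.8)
The plan is to treat the three assertions — local well-posedness, positivity/invariance, and boundedness (which also yields global existence) — in that order, the boundedness estimate being the only nontrivial point. First I would note that the right-hand side of \eqref{pa1}--\eqref{pf1} is $C^1$ on a neighbourhood of $\mathbb{R}^2_+$ (the only term needing a comment is $r_faA^2/(b+aA^2)$, whose denominator is bounded below by $b>0$), so Picard--Lindel\"of gives a unique solution on a maximal interval $[0,T_{\max})$. For positivity I would use the integrating-factor representation
\[
A(t)=A(0)\exp\!\Big(\textstyle\int_0^t\!\big(r_aF(s)-d_aA(s)\big)\,ds\Big),\qquad
F(t)=F(0)\exp\!\Big(\textstyle\int_0^t\!\big(\tfrac{r_faA(s)^2}{b+aA(s)^2}-d_fF(s)-r_cA(s)\big)\,ds\Big),
\]
valid on $[0,T_{\max})$, which shows that $A(t)$ and $F(t)$ keep the signs of $A(0)$ and $F(0)$; equivalently, the coordinate axes $\{A=0\}$ and $\{F=0\}$ are invariant (the relevant component of the vector field vanishes on them), so by uniqueness a trajectory started in the open quadrant cannot reach them. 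This yields positive invariance of both $\mathbb{R}^2_+$ and its interior.

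The crux is boundedness, and the key observation is that the assumption $r_c=cr_a$ makes the destabilising mutualistic cross-terms cancel in the weighted sum $W:=cA+F$. Indeed,
\[
\frac{dW}{dt}=cr_aAF-cd_aA^2+\frac{r_faA^2}{b+aA^2}F-d_fF^2-r_cAF=-cd_aA^2+\frac{r_faA^2}{b+aA^2}F-d_fF^2,
\]
and since $aA^2/(b+aA^2)\le 1$ we get $\dot W\le -cd_aA^2+r_fF-d_fF^2$. Then for any fixed $\alpha>0$,
\[
\dot W+\alpha W\le\big(-cd_aA^2+\alpha cA\big)+\big(-d_fF^2+(r_f+\alpha)F\big)\le\frac{c\alpha^2}{4d_a}+\frac{(r_f+\alpha)^2}{4d_f}=:M,
\]
each bracket being a downward-opening parabola in one variable. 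A comparison (Gr\"onwall) argument then gives $W(t)\le\frac{M}{\alpha}+\big(W(0)-\frac{M}{\alpha}\big)e^{-\alpha t}\le\max\{W(0),M/\alpha\}$ on $[0,T_{\max})$, and since $c>0$ and $A,F\ge 0$ this bounds $A$ and $F$ individually.

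Finally, a solution confined to a compact set cannot blow up in finite time, so $T_{\max}=\infty$, and together with positivity this is exactly the assertion of the lemma. I expect the only real obstacle to be spotting the right combination for the boundedness estimate: once the conversion-rate relation $r_c=cr_a$ is used to cancel the mutualistic terms in $\dot W$, everything else is routine completion of squares. Optimising over $\alpha$ (or letting $\alpha\to0^+$) additionally yields an explicit ultimate bound $\limsup_{t\to\infty}\big(cA(t)+F(t)\big)\le M/\alpha$ that can be recorded for use in the subsequent global analysis.
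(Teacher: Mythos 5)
Your proposal is correct, but the boundedness argument takes a genuinely different route from the paper's. The paper exploits the fact that the fungus equation is self-limiting independently of $A$: since $\frac{r_f a A^2}{b+aA^2}\le r_f$ and $-r_cAF\le 0$, one gets $\frac{dF}{dt}\le (r_f-d_fF)F$ directly, hence $\limsup_{t\to\infty}F(t)\le r_f/d_f$, and then feeds this bound into the ant equation to get $\limsup_{t\to\infty}A(t)\le r_ar_f/(d_ad_f)$ --- a two-step cascade requiring no structural cancellation and yielding the explicit attracting box $[0,\tfrac{r_ar_f}{d_ad_f}]\times[0,\tfrac{r_f}{d_f}]$. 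You instead build the single weighted function $W=cA+F$ and use the relation $r_c=cr_a$ to cancel the mutualistic cross-terms before completing squares; this is a legitimate and more ``portable'' Lyapunov-type technique (it would survive if the fungus growth term were not \emph{a priori} bounded in $A$), but here it is heavier than necessary and, unlike the paper's cascade, it actually depends on the conversion identity $r_c=cr_a$. Your treatment of positivity via the exponential integrating-factor representation is cleaner and more rigorous than the paper's appeal to continuity (the paper's rigorous ingredient is the comparison $\frac{dA}{dt}\ge -d_aA^2$, giving $A(t)\ge A(0)/(1+d_at)$), and your explicit handling of local existence and the no-blow-up conclusion fills a step the paper leaves implicit. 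One small slip: letting $\alpha\to 0^+$ does not produce an ultimate bound, since $M/\alpha\to\infty$ in that limit; you must optimise over $\alpha$ at an interior point, and even then the resulting bound is less transparent than the paper's componentwise limits.
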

\begin{proof}Notice that $A(0)=0$ then $A(t)=0$ for all $t\geq 0$; $F(0)=0$ then $F(t)=0$ for all $t\geq 0$. Therefore, 
\begin{enumerate}
\item If $A(0)=0$ and $F(0)=0$, then $\left(A(t),F(t)\right)=(0,0)$ for all $t\geq 0$.
\item If $A(0)=0$ and $F(0)>0$, then
$$\frac{dF}{dt}=\left(\frac{r_f a A^2}{b+ a A^2}-d_f F- r_c A\right)F=-d_f F^2 <0.$$
Thus, $\lim_{t\rightarrow \infty} F(t)=\lim_{t\rightarrow \infty} \frac{F(0)} {1+d_f t}=0$.
\item If $F(0)=0$ and $A(0)>0$, then
$$\frac{dA}{dt}=\left(r_a F-d_a A\right)A=-d_a A^2 <0.$$
Thus, $\lim_{t\rightarrow \infty} A(t)=\lim_{t\rightarrow \infty} \frac{A(0)} {1+d_a t}=0$.
\end{enumerate}
If $A(0)> 0$ and $F(0)> 0$, then due to the continuity of the system, it is impossible for either $A(t)$ or $F(t)$ {to drop} below 0. Thus, for any $A(0)\geq 0, F(0)\geq 0$, we have $A(t)\geq 0$ and $F(t)\geq 0$ for all $t\geq 0$. Now assume $A(0)\geq 0, F(0)\geq 0$, then according to the expression of $\frac{d F(t)}{dt}$, we have
$$ \frac{dF}{dt} \ = \left(\frac{r_f a A^2}{b+ a A^2}-d_f F- r_c A\right)F \leq \left(r_f-d_f F\right)F .$$
Thus, $\limsup_{t\rightarrow\infty} F(t)\leq \frac{r_f}{d_f}$. This indicates that for any $\epsilon >0$, there exists $T$ large enough, such that 
$$F(t) < \frac{r_f}{d_f} + \epsilon \mbox{ for all } t> T.$$
Therefore, we have 
$$\frac{dA}{dt}=\left(r_a F-d_a A\right)A \leq \left(r_a ( \frac{r_f}{d_f} + \epsilon)-d_a A\right)A, \mbox{ for all } t> T.$$
Since $\epsilon$ can be {arbitrarily} small, thus $\limsup_{t\rightarrow\infty} A(t)\leq \frac{r_ar_f}{d_a d_f}$. Therefore, we have shown that the system \eqref{pa1}-\eqref{pf1} is positively invariant and bounded in $R^2_+$. More specifically, the compact set $[0, \frac{r_ar_f}{d_a d_f}]\times[0,\frac{r_f}{d_f}]$ attracts all points in $\mathbb R^2_+$.

Moreover, if both $A(0)>0$ and $F(0)>0$, then we have follows
\bea
\frac{dA}{dt}&=&\left(r_a F-d_a A\right)A\geq -d_aA^2\Rightarrow A(t)\geq\frac{A(0)} {1+d_a t}>0\\
\frac{dF}{dt}&=&\left(\frac{r_f a A^2}{b+ a A^2}-d_f F- r_c A\right)F\geq-d_f F^2\Rightarrow F(t)\geq\frac{F(0)} {1+d_f t}>0
\eea 
Therefore, if both $A(0)>0$ and $F(0)>0$, then $A(t)>0$ and $F(t)>0$ for all $t>0$.
\end{proof}

\noindent\textbf{Remark:} Lemma \ref{l:pb} indicates that the population of leaf cutter ants and their fungus is bounded due to the limited resource in nature. Let $\mathring{\mathbb R}^2_+=\{(A,F)\in \mathbb R^2_+: A>0, F>0\}$, then from Lemma \ref{l:pb}, we know that $\left(A(0),F(0)\right)\in\mathring{\mathbb R}^2_+$ implies that $\left(A(t),F(t)\right)\in\mathring{\mathbb R}^2_+$ for all $t>0$. 

\begin{proposition}\label{p1:noperiod}For any initial condition taken in $\mathbb R^2_+$, the trajectory of the system \eqref{pa1}-\eqref{pf1} is converging to an equilibrium point. 
\end{proposition}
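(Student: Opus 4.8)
The plan is to combine the dissipativity already established in Lemma~\ref{l:pb} with the Poincar\'e--Bendixson theorem and a Dulac-type argument that rules out closed orbits. Since the system is planar and, by Lemma~\ref{l:pb}, every forward trajectory is bounded, the $\omega$-limit set of any point in $\mathbb R^2_+$ is a nonempty, compact, connected, invariant set; by Poincar\'e--Bendixson it is either an equilibrium, a periodic orbit, or a cycle of equilibria joined by heteroclinic/homoclinic orbits (a graphic). It therefore suffices to exclude the last two possibilities.

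First I would dispose of the boundary. The coordinate axes are invariant (this is exactly the first part of the proof of Lemma~\ref{l:pb}), and on them $\dot A=-d_aA^2\le 0$ or $\dot F=-d_fF^2\le 0$, so every boundary trajectory decreases monotonically to the origin, the only equilibrium on $\partial\mathbb R^2_+$. Hence no periodic orbit or graphic can meet $\partial\mathbb R^2_+$ except possibly at $(0,0)$, and any closed invariant curve to be excluded lies in the open quadrant $\mathring{\mathbb R}^2_+$, apart from the borderline case of a graphic through the origin. On $\mathring{\mathbb R}^2_+$ I would apply Dulac's criterion with the multiplier $B(A,F)=\frac{1}{AF}$. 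Writing the system as $\dot A=P$, $\dot F=Q$, a short computation gives
\[
\frac{\partial (BP)}{\partial A}+\frac{\partial (BQ)}{\partial F}\;=\;-\frac{d_a}{F}-\frac{d_f}{A}\;<\;0\qquad\text{on }\mathring{\mathbb R}^2_+ .
\]
Because $\mathring{\mathbb R}^2_+$ is simply connected and this divergence has a fixed sign, Dulac's criterion — in the form that also excludes separatrix cycles, via Green's theorem on the region enclosed by the cycle (the connecting orbits being tangent to the vector field, and the equilibria contributing zero) — shows there is no periodic orbit and no graphic contained in $\mathring{\mathbb R}^2_+$.

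The only remaining case is a graphic through the origin. I would handle it by first locating all equilibria: on $\partial\mathbb R^2_+$ only $(0,0)$, and in the interior at most two points, obtained by intersecting the nullcline $F=d_aA/r_a$ with the quadratic equation $A^2-\frac{r_f}{k}A+\frac{b}{a}=0$ where $k=\frac{d_ad_f}{r_a}+r_c$. With only finitely many equilibria, I would then argue that no graphic can close up through $(0,0)$ — for instance by a local/blow-up analysis at the origin, or by a Butler--McGehee-type argument showing that any interior trajectory whose $\omega$-limit set contains the origin must in fact converge to it. Once all periodic orbits and graphics are excluded, Poincar\'e--Bendixson forces each $\omega$-limit set to be a single equilibrium, which is the assertion.

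The step I expect to be the main obstacle is precisely this last one: the Jacobian of the vector field vanishes at the origin, so $(0,0)$ is a highly degenerate equilibrium, and the Dulac multiplier $1/(AF)$ is singular on the axes, so the generalized Dulac argument does not by itself kill a graphic that touches $(0,0)$. Making that corner case rigorous requires care, whereas the exclusion of periodic orbits and of interior graphics, together with the boundedness from Lemma~\ref{l:pb}, is a routine application of planar dynamical systems theory.
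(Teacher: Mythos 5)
Your proposal follows essentially the same route as the paper: Poincar\'e--Bendixson for the bounded planar flow combined with Dulac's criterion using the multiplier $B(A,F)=\frac{1}{AF}$, whose weighted divergence is $-\frac{d_a}{F}-\frac{d_f}{A}<0$ on the open quadrant. The additional care you take with the third Poincar\'e--Bendixson alternative (a graphic through the degenerate origin, where the multiplier is singular) actually goes beyond the paper's own argument, which states the dichotomy simply as ``fixed point or limit cycle'' and applies Dulac on $\mathring{\mathbb R}^2_+$ without addressing that corner case.
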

\begin{proof}By Poincar\'e-Bendixson Theorem (Guckenheimer and Holmes 1983), the omega limit set of the system \eqref{pa1}-\eqref{pf1} is either a fixed point or a limit cycle. If there exists a function $B(A,F): \mathbb R^2_+ \rightarrow R_+$, such that
$$ \frac{\partial}{\partial A}\left[B(A,F)\left(r_a F-d_a A\right)A\right]+\frac{\partial}{\partial F}\left[B(A,F)\left(\frac{r_f a A^2}{b+ a A^2}-d_f F- r_c A\right)F\right]<0,$$
then we can use Dulac's criterion (Guckenheimer and Holmes 1983) to exclude the existence of a limit cycle for the system \eqref{pa1}-\eqref{pf1} . Let $B(A,F)=\frac{1}{AF}$. Then,
$$ \frac{\partial}{\partial A}\left[B(A,F)\left(r_a F-d_a A\right)A\right]+\frac{\partial}{\partial F}\left[B(A,F)\left(\frac{r_f a A^2}{b+ a A^2}-d_f F- r_c A\right)F\right]=-\frac{d_a}{F}-\frac{d_f}{A}<0$$ holds for any $\left(A(0),F(0)\right)\in\mathring{\mathbb R}^2_+$. Therefore, by Dulac's criterion, the system \eqref{pa1}-\eqref{pf1} has no limit cycle, i.e., any trajectory of \eqref{pa1}-\eqref{pf1} starting with a {non-negative} initial condition converges to a fixed point.
\end{proof}

\noindent\textbf{Remark:} Proposition \ref{p1:noperiod} implies that the population dynamics of leaf cutter ants and their fungus is simple in the sense that they do not have {a  limit cycle}, i.e., if time $t$ is large enough, then the population of leaf cutter ants and their fungus approach to some {fixed} point. Therefore, the short time dynamics of leaf cutter ants and their fungus garden is more important since it can give us more information on the dynamics of the interaction between ants and fungus.
\begin{proposition}\label{p2:equilibria}If $a<4b\left(\frac{r_c\,r_a+d_f\,d_a}{r_a\,r_f}\right)^2$, then the system \eqref{pa1}-\eqref{pf1} has only trivial equilibrium $(0,0)$; while if $a=4b\left(\frac{r_c\,r_a+d_f\,d_a}{r_a\,r_f}\right)^2$, then the system \eqref{pa1}-\eqref{pf1} has the only positive equilibria $$\left(A^i,F^i\right)=\left(\frac{r_f \,r_a}{2\left(r_c\,r_a+d_f\,d_a\right)},\frac{r_f \,r_a\,d_a}{2r_a\left(r_c\,r_a+d_f\,d_a\right)}\right)$$ in addition to $(0,0)$; while if $a>4b\left(\frac{r_c\,r_a+d_f\,d_a}{r_a\,r_f}\right)^2$, then the system \eqref{pa1}-\eqref{pf1} has the following two positive equilibria in addition to $(0,0)$:
\bae\label{eq:interor}
\left(A^{i1},F^{i1}\right)=\left(A^{i1}, \frac{d_a}{r_a}A^{i1}\right)&\mbox{ and }&\left(A^{i2},F^{i2}\right)=\left(A^{i2}, \frac{d_a}{r_a}A^{i2}\right)
\eae where \bea
A^{i1}&=&\frac{r_f \,r_a}{2\left(r_c\,r_a+d_f\,d_a\right)}-\sqrt{\left(\frac{r_f\, r_a}{2\left(r_c\,r_a+d_f\,d_a\right)}\right)^2-\frac{b}{a}}\\
A^{i2}&=&\frac{r_f r_a}{2\left(r_c\,r_a+d_f\,d_a\right)}+\sqrt{\left(\frac{r_f\, r_a}{2\left(r_c\,r_a+d_f\,d_a\right)}\right)^2-\frac{b}{a}}
\eea 
\end{proposition}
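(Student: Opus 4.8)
The plan is to solve the equilibrium system directly. Setting the right-hand sides of \eqref{pa1}--\eqref{pf1} to zero, the first equation $(r_a F - d_a A)A = 0$ forces either $A = 0$ or $F = \frac{d_a}{r_a}A$. If $A = 0$, the second equation reduces to $-d_f F^2 = 0$, hence $F = 0$, which accounts for the trivial equilibrium $(0,0)$. So for any nontrivial equilibrium we must have $A > 0$ and $F = \frac{d_a}{r_a}A > 0$, and the whole question reduces to locating the positive roots of a single scalar equation.

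Next I would substitute $F = \frac{d_a}{r_a}A$ into the bracket of \eqref{pf1}, factor out the common $A > 0$ from the death and conversion terms, and divide through by $A$, arriving at
\[
\frac{r_f a A}{b + a A^2} \;=\; \frac{r_c r_a + d_f d_a}{r_a} \;=:\; k,
\]
where $k > 0$ since all parameters are positive. Clearing the denominator gives the quadratic $k a A^2 - r_f a A + k b = 0$; assuming $a > 0$ (the case $a = 0$ leaves $kb > 0$ with no root, and indeed $0 < 4b(k/r_f)^2$, so it falls under the first alternative) this is equivalent to $A^2 - \frac{r_f}{k}A + \frac{b}{a} = 0$. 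Its discriminant is $\big(\tfrac{r_f}{k}\big)^2 - \tfrac{4b}{a}$, which is negative, zero, or positive precisely according as $a < 4b\big(\tfrac{k}{r_f}\big)^2$, $a = 4b\big(\tfrac{k}{r_f}\big)^2$, or $a > 4b\big(\tfrac{k}{r_f}\big)^2$; since $\tfrac{k}{r_f} = \tfrac{r_c r_a + d_f d_a}{r_a r_f}$, this is exactly the trichotomy in the statement.

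Finally I would read off the roots and verify their signs. In the first case there is no real root, so $(0,0)$ is the only equilibrium. In the borderline case the double root is $A^i = \tfrac{r_f}{2k} = \tfrac{r_f r_a}{2(r_c r_a + d_f d_a)}$, and pairing it with $F = \tfrac{d_a}{r_a}A$ gives $(A^i, F^i)$. In the last case, Vieta's formulas give root sum $\tfrac{r_f}{k} > 0$ and root product $\tfrac{b}{a} > 0$, so both roots are positive; writing them as $\tfrac{r_f}{2k} \mp \sqrt{\big(\tfrac{r_f}{2k}\big)^2 - \tfrac{b}{a}}$ and substituting $\tfrac{r_f}{2k} = \tfrac{r_f r_a}{2(r_c r_a + d_f d_a)}$ recovers $A^{i1}, A^{i2}$, and hence $(A^{i1},F^{i1})$ and $(A^{i2},F^{i2})$ via $F = \tfrac{d_a}{r_a}A$. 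There is no genuine analytic obstacle here — the only thing needing care is the algebraic bookkeeping: reconciling $\tfrac{1}{2k}\sqrt{(r_f/k)^2 - 4b/a}$ with the stated radical $\sqrt{\big(\tfrac{r_f r_a}{2(r_c r_a + d_f d_a)}\big)^2 - \tfrac{b}{a}}$, and confirming positivity of both roots in the last case.
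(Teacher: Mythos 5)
Your proposal is correct and follows essentially the same route as the paper: intersect the nullcline $F=\frac{d_a}{r_a}A$ with the fungus nullcline, reduce to the quadratic $A^2-\frac{r_f}{k}A+\frac{b}{a}=0$ with $k=\frac{r_c r_a+d_f d_a}{r_a}$, and read the trichotomy off the discriminant. Your treatment is in fact slightly more careful than the paper's (which just invokes ``simple algebraic calculations''), since you verify positivity of both roots via Vieta and dispose of the boundary and $a=0$ cases explicitly.
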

\begin{proof}It is easy to see that $(0,0)$ is always an equilibrium of the system \eqref{pa1}-\eqref{pf1}. The nullclines of \eqref{pa1}-\eqref{pf1} can be founded as

\bea
\frac{dA}{dt}=0\Rightarrow A=0 &\mbox{ or }& F=\frac{d_a }{r_a}A\\
\frac{dF}{dt}=0\Rightarrow F=0 &\mbox{ or }& F=\frac{r_f a A^2}{d_f\left(b+ a A^2\right)}- \frac{r_c}{d_f}A
\eea
By solving $\frac{r_f a A^2}{d_f\left(b+ a A^2\right)}- \frac{r_c}{d_f}A=\frac{d_a }{r_a}A$ for A, we have the following two cases
\begin{enumerate}
\item If $a>4b\left(\frac{r_c\,r_a+d_f\,d_a}{r_a\,r_f}\right)^2$, then by simple algebraic calculations, there are the following two positive solutions of $\frac{r_f a A^2}{d_f\left(b+ a A^2\right)}- \frac{r_c}{d_f}A=\frac{d_a }{r_a}A$:
\bea
A^{i1}&=&\frac{r_f \,r_a}{2\left(r_c\,r_a+d_f\,d_a\right)}-\sqrt{\left(\frac{r_f\, r_a}{2\left(r_c\,r_a+d_f\,d_a\right)}\right)^2-\frac{b}{a}}\\
A^{i2}&=&\frac{r_f r_a}{2\left(r_c\,r_a+d_f\,d_a\right)}+\sqrt{\left(\frac{r_f\, r_a}{2\left(r_c\,r_a+d_f\,d_a\right)}\right)^2-\frac{b}{a}}
\eea Thus, the two interior equilibria are
$$\left(A^{i1},F^{i1}\right)=\left(A^{i1}, \frac{d_a}{r_a}A^{i1}\right)\,\,\mbox{ and }\,\,\left(A^{i2},F^{i2}\right)=\left(A^{i2}, \frac{d_a}{r_a}A^{i2}\right).$$
\item If $a=4b\left(\frac{r_c\,r_a+d_f\,d_a}{r_a\,r_f}\right)^2$, then the system \eqref{pa1}-\eqref{pf1} has only one positive equilibria $\left(A^i,F^i\right)$ where
$$\left(A^i,F^i\right)=\left(\frac{r_f \,r_a}{2\left(r_c\,r_a+d_f\,d_a\right)},\frac{r_f \,r_a\,d_a}{2r_a\left(r_c\,r_a+d_f\,d_a\right)}\right)$$ 
\item If $a<4b\left(\frac{r_c\,r_a+d_f\,d_a}{r_a\,r_f}\right)^2$, then there is only one trivial equilibrium: $A=0$ and $F=0$.
\end{enumerate}
Therefore, the statement of Proposition \ref{p2:equilibria} holds.
\end{proof}

\noindent\textbf{Remark:} Recall that $a$ is a parameter measuring the division of labor of workers[j9]. Proposition \ref{p2:equilibria} implies that if $a$ is too small, i.e., the ratio of adult ants that take care of fungus to adult ants that forage for leaves, $\frac{q}{1-q}$, is too small, then the system \eqref{pa1}-\eqref{pf1}
has only trivial equilibrium point $(0,0)$. This leads to the following theorem:
\begin{theorem}\label{th1:extinction}[Extinction of Two Species] If $a<4b\left(\frac{r_c\,r_a+d_f\,d_a}{r_a\,r_f}\right)^2$ , then the system \eqref{pa1}-\eqref{pf1} has global stability at $(0,0)$.
\end{theorem}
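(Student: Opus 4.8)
The plan is to assemble the results already proved rather than argue from scratch. By Proposition~\ref{p2:equilibria}, the hypothesis $a<4b\left(\frac{r_c\,r_a+d_f\,d_a}{r_a\,r_f}\right)^2$ forces $(0,0)$ to be the \emph{only} equilibrium of \eqref{pa1}--\eqref{pf1} in $\mathbb R^2_+$. By Lemma~\ref{l:pb}, $\mathbb R^2_+$ is positively invariant and every trajectory starting in it is bounded, so every $\omega$-limit set is a nonempty compact subset of $\mathbb R^2_+$. By Proposition~\ref{p1:noperiod}, every such trajectory converges to an equilibrium point. Combining these three facts, every trajectory in $\mathbb R^2_+$ converges to $(0,0)$, i.e. $(0,0)$ is globally attracting; the trajectories starting on the axes are covered directly by the explicit computations (cases 1--3) in the proof of Lemma~\ref{l:pb}.

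To upgrade global attractivity to global asymptotic (Lyapunov) stability, I would exhibit a family of forward-invariant neighborhoods of the origin shrinking to $(0,0)$ (here $a=p^2q(1-q)>0$ in the nondegenerate case). Fix any $\epsilon$ with $0<\epsilon<\frac{b\,r_c}{r_f\,a}$ and put $\delta=\frac{d_a\epsilon}{r_a}$; I claim the rectangle $R_\epsilon=[0,\epsilon]\times[0,\delta]$ is forward invariant. On the edge $A=\epsilon$, $0\le F\le\delta$, one has $\frac{dA}{dt}=(r_aF-d_a\epsilon)\epsilon\le(r_a\delta-d_a\epsilon)\epsilon=0$. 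On the edge $F=\delta$, $0\le A\le\epsilon$, the bound $\frac{r_f a A^2}{b+aA^2}\le\frac{r_f a A^2}{b}\le\frac{r_f a\epsilon}{b}A\le r_cA$ gives $\frac{dF}{dt}\le\left(\frac{r_f a A^2}{b+aA^2}-r_cA\right)\delta\le0$. The coordinate axes are invariant by Lemma~\ref{l:pb}. Since the sets $R_\epsilon$ shrink to $\{(0,0)\}$ as $\epsilon\to0$, solutions starting near the origin stay near it, which is Lyapunov stability; together with the global attractivity of the first paragraph this is global stability at $(0,0)$.

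The only point requiring care — and the reason the argument must go through Proposition~\ref{p1:noperiod} rather than a local computation — is that the Jacobian of \eqref{pa1}--\eqref{pf1} at $(0,0)$ is the zero matrix, so the stability of the origin is invisible to linearization; this is what motivates the invariant-rectangle estimate above. (If one interprets ``global stability'' as merely ``every solution tends to $(0,0)$'', then the first paragraph already suffices, noting that the Dulac function $B=\frac{1}{AF}$ used in Proposition~\ref{p1:noperiod} also rules out any homoclinic loop at the origin lying in the open quadrant, so the $\omega$-limit set cannot be anything but the single point $(0,0)$.)
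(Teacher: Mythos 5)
Your first paragraph is exactly the paper's argument: Proposition~\ref{p1:noperiod} (Dulac plus Poincar\'e--Bendixson gives convergence to an equilibrium) combined with Proposition~\ref{p2:equilibria} (under the stated inequality the origin is the only equilibrium) yields that every trajectory in $\mathbb R^2_+$ tends to $(0,0)$; the paper's proof consists of precisely these two citations and stops there. Where you genuinely go beyond the paper is the second paragraph: the paper never verifies Lyapunov stability of the origin, even though ``global stability'' ought to include it, and this is not automatic here because, as you correctly observe, the Jacobian at $(0,0)$ is the zero matrix, so linearization is silent (the paper's later claim in Theorem~\ref{th3:basinattraction} that $(0,0)$ is locally asymptotically stable is likewise justified there only by appeal to attractivity). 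Your invariant rectangles $R_\epsilon=[0,\epsilon]\times[0,d_a\epsilon/r_a]$ with $\epsilon<\frac{b\,r_c}{r_f a}$ check out: on $A=\epsilon$ the choice of $\delta$ makes $\dot A\le 0$, and on $F=\delta$ the bound $\frac{r_f a A^2}{b+aA^2}\le \frac{r_f a \epsilon}{b}A\le r_c A$ makes $\dot F\le -d_f\delta^2<0$, so the rectangles are forward invariant and shrink to the origin. This makes your proof strictly more complete than the paper's. One small caution, which applies equally to the paper: Proposition~\ref{p1:noperiod} is proved only by excluding closed orbits, while Poincar\'e--Bendixson also allows graphics; your parenthetical appeal to the Dulac function $B=\frac{1}{AF}$ to exclude a homoclinic loop at the origin is the right idea but needs a word about the singularity of $B$ at $(0,0)$ before Green's theorem can be applied to the enclosed region. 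Since you flag the issue rather than hide it, I would count this as a refinement to spell out, not a gap.
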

\begin{proof}From Proposition \ref{p2:equilibria}, we know that for any initial condition taken in $\mathbb R^2_+$, the trajectory of the system \eqref{pa1}-\eqref{pf1} is converging to an equilibrium point. If  $a<4b\left(\frac{r_c\,r_a+d_f\,d_a}{r_a\,r_f}\right)^2$, then according to Proposition \ref{p2:equilibria}, the only equilibrium of the system \eqref{pa1}-\eqref{pf1} is the origin $(0,0)$. Therefore, we can conclude that  the system \eqref{pa1}-\eqref{pf1} has global stability at $(0,0)$ when $a<4b\left(\frac{r_c\,r_a+d_f\,d_a}{r_a\,r_f}\right)^2$.
\end{proof}

\noindent\textbf{Biological Implications:} Theorem \ref{th1:extinction} indicates that division of labor is an important factor determining whether the {early colony stage} of leaf cutter ants can survive or not. Recall that the proportion of ants performing a task is essentially equivalent to energy devoted to a given task. In the case that {the population of adult ants is too small,} i.e., $q$ is too small, or {the population of adult ants foraging for food is too small,} i.e., $(1-q)$ is too small, then $a<q(1-q)$ will be too small such that $a<4b\left(\frac{r_c\,r_a+d_f\,d_a}{r_a\,r_f}\right)^2$. This leads to the extinction of both ants and fungus.\\

In order to investigate the biological conditions when leaf cutter ants and their fungus can coexist, we have the following theorem:
\begin{theorem}\label{th2:localstability}[Coexistence of Two Species] If $a>4b\left(\frac{r_c\,r_a+d_f\,d_a}{r_a\,r_f}\right)^2$, then the system \eqref{pa1}-\eqref{pf1} has two positive equilibria $\left(A^{i1},F^{i1}\right)$ and $\left(A^{i2},F^{i2}\right)$ where $\left(A^{i1},F^{i1}\right)$ is always unstable and $\left(A^{i2},F^{i2}\right)$ is always locally asymptotically stable.
\end{theorem}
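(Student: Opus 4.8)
The plan is to linearize \eqref{pa1}--\eqref{pf1} at each of the two interior equilibria and decide stability from the trace and determinant of the Jacobian, exploiting crucially that at an interior equilibrium both bracketed factors vanish.

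First I would write the vector field as $\frac{dA}{dt}=A\,g_1(A,F)$ and $\frac{dF}{dt}=F\,g_2(A,F)$ with $g_1=r_aF-d_aA$ and $g_2=\frac{r_f aA^2}{b+aA^2}-d_fF-r_cA$. Since at a positive equilibrium $(A^*,F^*)$ one has $g_1=g_2=0$, the Jacobian there collapses to
\[
J(A^*,F^*)=\begin{pmatrix} -\,d_aA^* & r_aA^*\\[3pt] F^*\,\dfrac{\partial g_2}{\partial A} & -\,d_fF^* \end{pmatrix},\qquad
\frac{\partial g_2}{\partial A}=\frac{2\,r_f\,a\,b\,A^*}{\bigl(b+a(A^*)^2\bigr)^2}-r_c .
\]
The trace $-d_aA^*-d_fF^*$ is strictly negative at every positive equilibrium, so stability is governed entirely by the sign of $\det J(A^*,F^*)=A^*F^*\bigl(d_ad_f-r_a\,\partial g_2/\partial A\bigr)$: a negative determinant forces a saddle (hence instability), while a positive determinant together with the negative trace forces local asymptotic stability.

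Next I would translate this determinant into a statement about the slopes of the nullclines. Writing the $F$-nullcline as $F=\phi(A):=\frac{r_f aA^2}{d_f(b+aA^2)}-\frac{r_c}{d_f}A$, the $A$-nullcline as $F=\psi(A):=\frac{d_a}{r_a}A$, and $\Delta(A):=\phi(A)-\psi(A)$, a one-line computation gives $\det J(A^*,F^*)=-\,r_a d_f\,A^*F^*\,\Delta'(A^*)$, so an interior equilibrium is unstable precisely when $\Delta'>0$ there and (locally asymptotically) stable when $\Delta'<0$ there. To obtain the signs of $\Delta'$ at $A^{i1}$ and $A^{i2}$, I would reuse the factorization behind Proposition \ref{p2:equilibria}: clearing denominators shows
\[
\Delta(A)=-\,\frac{(r_c r_a+d_f d_a)\,a}{r_a\,d_f}\cdot\frac{A\,(A-A^{i1})(A-A^{i2})}{b+aA^2},
\]
with $0<A^{i1}<A^{i2}$ under the hypothesis $a>4b\bigl(\frac{r_c r_a+d_f d_a}{r_a r_f}\bigr)^2$ (which is exactly the condition that the discriminant $\frac{r_f^2 r_a^2}{4(r_c r_a+d_f d_a)^2}-\frac{b}{a}$ be positive, so the two roots are genuinely distinct). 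Differentiating and evaluating at a root annihilates every term except the one in which that root's factor is differentiated, giving
\[
\Delta'(A^{i1})=-\,\frac{(r_c r_a+d_f d_a)a}{r_a d_f}\cdot\frac{A^{i1}\,(A^{i1}-A^{i2})}{b+a(A^{i1})^2}>0,\qquad
\Delta'(A^{i2})=-\,\frac{(r_c r_a+d_f d_a)a}{r_a d_f}\cdot\frac{A^{i2}\,(A^{i2}-A^{i1})}{b+a(A^{i2})^2}<0,
\]
since $A^{i1}-A^{i2}<0<A^{i2}-A^{i1}$ and all remaining factors are positive. Hence $\det J(A^{i1},F^{i1})<0$, so $(A^{i1},F^{i1})$ is a saddle, hence unstable, whereas $\det J(A^{i2},F^{i2})>0$ together with the negative trace makes $(A^{i2},F^{i2})$ locally asymptotically stable.

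The routine but essential work is the simplification of $\partial g_2/\partial A$ and the identity $\det J=-r_a d_f A^*F^*\Delta'(A^*)$; once the ``slope of $\phi$ versus slope of $\psi$'' picture is in place the conclusion is immediate, and no borderline case intervenes because the strict inequality in the hypothesis keeps $A^{i1}\neq A^{i2}$. Alternatively one could bypass the explicit factorization by observing that $\Delta(0)=0$, $\Delta(A)<0$ for small $A>0$, and $\Delta$ has no positive zero other than $A^{i1}<A^{i2}$, so $\Delta$ is necessarily increasing at $A^{i1}$ and decreasing at $A^{i2}$.
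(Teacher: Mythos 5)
Your proposal is correct and follows essentially the same route as the paper: linearize at each interior equilibrium, observe that the trace is negative, and let the sign of the determinant distinguish the saddle $\left(A^{i1},F^{i1}\right)$ from the stable node $\left(A^{i2},F^{i2}\right)$. The only difference is cosmetic: the paper obtains the determinant's sign by explicit expansion, reducing it to whether $A^*$ lies above or below $\frac{r_a r_f}{2(r_a r_c+d_a d_f)}$ (the vertex between the two roots), whereas you reach the same conclusion via the identity $\det J=-r_a d_f A^*F^*\,\Delta'(A^*)$ and the factorization of the nullcline difference $\Delta$, which is an equivalent and slightly more geometric bookkeeping of the same fact.
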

\begin{proof}From Proposition \ref{p2:equilibria}, we know that the system \eqref{pa1}-\eqref{pf1} has two positive equilibria $\left(A^{i1},F^{i1}\right)$ and $\left(A^{i2},F^{i2}\right)$ when $a>4b\left(\frac{r_c\,r_a+d_f\,d_a}{r_a\,r_f}\right)^2$. The local stability can be determined from the eigenvalues of its Jacobian Matrices evaluated at these equilibria.

Assume that $(A^*,F^*)$ is an equilibrium point of \eqref{pa1}-\eqref{pf1}, then its Jacobian Matrices evaluated at this equilibrium can be written as follows
\bae\label{J}
J\vert_{(A^*,F^*)}=\left[\begin{array}{cc}-d_a\,A^*&r_a\,A^*\\ \frac{d_a\,A^*}{r_a}\left(2\left(r_c+\frac{d_a\,d_f}{r_a}\right)\left(1-A^*\,\left(\frac{r_c}{r_f}+\frac{d_a\,d_f}{r_a\,r_f}\right)\right)-r_c\right)&-\frac{d_a\,d_f\,A^*}{r_a}\end{array}\right]
\eae
Then we have
$$\begin{array}{ccc}trace\left(J\vert_{(A^*,F^*)}\right)&=&-d_a\,A^*\frac{r_a+d_f}{r_a}<0\\
 det\left(J\vert_{(A^*,F^*)}\right)&=&\frac{d_a\,(A^*)^2\left(r_a\,r_c+d_a\,d_f\right)\left(2r_a\,r_c\,A^*-r_a\,r_f+2d_a\,d_f\,A^*\right)}{r_a^2\,d_f}.\end{array}$$
 This implies that  if $A^*>\frac{r_a\,r_f}{2\left(r_a\,r_c+d_a\,d_f\right)}$, then $(A^*,F^*)$ is locally asymptotically stable; while if $A^*<\frac{r_a\,r_f}{2\left(r_a\,r_c+d_a\,d_f\right)}$, then $(A^*,F^*)$ is a saddle node, i.e., unstable. Since 
 \bea
A^{i1}&=&\frac{r_f \,r_a}{2\left(r_c\,r_a+d_f\,d_a\right)}-\sqrt{\left(\frac{r_f\, r_a}{2\left(r_c\,r_a+d_f\,d_a\right)}\right)^2-\frac{b}{a}}<\frac{r_a\,r_f}{2\left(r_a\,r_c+d_a\,d_f\right)}\\
A^{i2}&=&\frac{r_f r_a}{2\left(r_c\,r_a+d_f\,d_a\right)}+\sqrt{\left(\frac{r_f\, r_a}{2\left(r_c\,r_a+d_f\,d_a\right)}\right)^2-\frac{b}{a}}>\frac{r_a\,r_f}{2\left(r_a\,r_c+d_a\,d_f\right)}
\eea 
 Therefore, $\left(A^{i1},F^{i1}\right)$ is always unstable and $\left(A^{i2},F^{i2}\right)$ is always locally asymptotically stable when $a>4b\left(\frac{r_c\,r_a+d_f\,d_a}{r_a\,r_f}\right)^2$.
 \end{proof}
\noindent\textbf{Biological Implications:} Theorem \ref{th2:localstability} implies that if allocation of workers to different tasks is in a good range, i.e., $a>4b\left(\frac{r_c\,r_a+d_f\,d_a}{r_a\,r_f}\right)^2$, then both leaf cutter ants and their fungus garden can coexist, because the system \eqref{pa1}-\eqref{pf1} has a locally asymptotically stable interior equilibrium $\left(A^{i2},F^{i2}\right)$. On the other hand, for a fixed value of $a$, if $d_a, r_c$ and $\frac{d_a}{r_a}$ are small enough, then $a>4b\left(\frac{r_c\,r_a+d_f\,d_a}{r_a\,r_f}\right)^2$ holds, thus two species can coexist. Now the more interesting question is whether relative allocation among tasks is the only factor determining whether ants and fungus can coexist. The next theorem will answer this question.  \\

\begin{theorem}\label{th3:basinattraction}[Basin of Attraction of (0,0)] The trivial equilibrium $(0,0)$ is always locally asymptotically stable if $a\neq4b\left(\frac{r_c\,r_a+d_f\,d_a}{r_a\,r_f}\right)^2$. Moreover, if $a>4b\left(\frac{r_c\,r_a+d_f\,d_a}{r_a\,r_f}\right)^2$, then the basin of attraction of $(0,0)$ contains in the region $B_{(0,0)}\setminus S_{(A^{i1},F^{i1})}$ where
$$B_{(0,0)}=\left\{(A,F)\in \mathring{\mathbb R}^2_+: \frac{r_f a A^2}{d_f\left(b+ a A^2\right)}- \frac{r_c A}{d_f}\leq F<F^{i1} \right\}$$ and
$$S_{(A^{i1},F^{i1})}=\{(A,F)\in \mathring{\mathbb R}^2_+: \lim_{t\rightarrow\infty}\left(A(t),F(t)\right)=(A^{i1},F^{i1})\}.$$
\end{theorem}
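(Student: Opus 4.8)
The plan is to lean on the global convergence statement (Proposition~\ref{p1:noperiod}) together with the equilibrium list (Proposition~\ref{p2:equilibria}, Theorem~\ref{th2:localstability}), and to control orbits by forward-invariant rectangles in the phase plane. Throughout, write $N(A)=\frac{r_f a A^2}{d_f(b+aA^2)}-\frac{r_cA}{d_f}$ for the nontrivial $F$-nullcline and $L(A)=\frac{d_a}{r_a}A$ for the nontrivial $A$-nullcline. By Proposition~\ref{p2:equilibria} these curves meet precisely at $A=A^{i1}<A^{i2}$ (a third crossing would be a forbidden third positive equilibrium), so $N-L$ is negative on $(0,A^{i1})$ and positive on $(A^{i1},A^{i2})$; in particular $N(A)<L(A)<F^{i1}$ for $A\in(0,A^{i1})$ while $N(A)>F^{i1}$ for $A\in(A^{i1},A^{i2})$, and $N(A)<F^{i1}$ holds exactly on $(0,A^{i1})\cup(A_{**},\infty)$ for some $A_{**}>A^{i2}$.

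\emph{Local stability of $(0,0)$.} The Jacobian at the origin is the zero matrix, so linearization says nothing and I argue directly. If $a<4b\left(\frac{r_c\,r_a+d_f\,d_a}{r_a\,r_f}\right)^2$ this is Theorem~\ref{th1:extinction}. If $a>4b\left(\frac{r_c\,r_a+d_f\,d_a}{r_a\,r_f}\right)^2$, fix a small $\delta\in(0,A^{i1})$ and consider $R_\delta=(0,\delta)\times(0,\tfrac{d_a}{r_a}\delta)$: on the edge $A=\delta$ one has $\frac{dA}{dt}=(r_aF-d_a\delta)\delta\le0$; on the edge $F=\tfrac{d_a}{r_a}\delta$ one has $\frac{dF}{dt}=d_f\bigl(N(A)-\tfrac{d_a}{r_a}\delta\bigr)\tfrac{d_a}{r_a}\delta<0$ for $\delta$ small (using $N(A)\le\frac{r_faA^2}{d_fb}$); and by Lemma~\ref{l:pb} orbits starting in $\mathring{\mathbb R}^2_+$ never reach the axes. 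Thus $R_\delta$ is forward invariant and, for $\delta$ small, contains no positive equilibrium, so Proposition~\ref{p1:noperiod} forces every orbit in $R_\delta$ to converge to $(0,0)$; nesting $R_\delta$ inside arbitrary neighbourhoods of the origin yields local asymptotic stability. (An alternative is the Lyapunov function $V=A+\tfrac{r_a}{r_c}F$, for which $\dot V=-d_aA^2-\tfrac{r_ad_f}{r_c}F^2+\tfrac{r_a}{r_c}\,\tfrac{r_faA^2}{b+aA^2}F$ is negative definite near $(0,0)$.)

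\emph{The basin estimate, easy part.} Assume $a>4b\left(\frac{r_c\,r_a+d_f\,d_a}{r_a\,r_f}\right)^2$. The engine is that $\Omega=(0,A^{i1})\times(0,F^{i1})$ is forward invariant: on $A=A^{i1}$, $\frac{dA}{dt}=(r_aF-d_aA^{i1})A^{i1}<0$ since $F<F^{i1}=L(A^{i1})$; on $F=F^{i1}$, $\frac{dF}{dt}$ has the sign of $N(A)-F^{i1}<0$ since $A<A^{i1}$. Inside $\Omega$ one has $F(t)<F^{i1}$ for all $t$; by Proposition~\ref{p1:noperiod} the orbit tends to an equilibrium whose $F$-coordinate is at most $F^{i1}$, and among the three equilibria---$F$-coordinates $0$, $F^{i1}$, $F^{i2}>F^{i1}$---this is $(0,0)$ or $(A^{i1},F^{i1})$, the latter only when the orbit lies on $S_{(A^{i1},F^{i1})}$. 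Hence $\Omega\setminus S_{(A^{i1},F^{i1})}$ is contained in the basin of $(0,0)$. Since the part of $B_{(0,0)}$ with $A<A^{i1}$ is contained in $\Omega$, that portion of $B_{(0,0)}$ is done, and only the ``far'' part with $A>A_{**}$ remains.

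\emph{The far part, and the main obstacle.} On the far part $F<F^{i1}$ and $A>A^{i2}>A^{i1}$, so $\frac{dA}{dt}=(r_aF-d_aA)A<0$, and $A$ decreases strictly while $A>A^{i1}$ and $F<F^{i1}$; hence in finite time either $A$ drops to $A^{i1}$ with $F$ still below $F^{i1}$---then the orbit enters $\Omega$ through the inward-pointing edge $A=A^{i1}$ and converges to $(0,0)$---or the orbit converges to the saddle $(A^{i1},F^{i1})$ (so lies on $S_{(A^{i1},F^{i1})}$), or $F$ reaches $F^{i1}$ first, necessarily at some $A\in(A^{i1},A_{**}]$. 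The last alternative is the difficulty: where $N(A)>F^{i1}$ one has $\frac{dF}{dt}>0$ on the line $F=F^{i1}$, so the orbit could cross upward into $\{F>F^{i1},\,A^{i1}<A<A^{i2}\}$ and then, $(A^{i2},F^{i2})$ being locally stable, converge there instead of to $(0,0)$. Ruling this out is the crux, and it is not automatic: $B_{(0,0)}$ is \emph{not} itself forward invariant (orbits can leave through its lower boundary $F=N(A)$). One needs either a finer forward-invariant set keeping such orbits below $F=F^{i1}$ until $A$ passes below $A^{i1}$, or an argument through the separatrix $S_{(A^{i1},F^{i1})}=W^s(A^{i1},F^{i1})$---which enters the saddle along a decreasing direction, $J|_{(A^{i1},F^{i1})}$ having a cooperative sign pattern---placing all of $B_{(0,0)}$ on the $(0,0)$-side of it. Granting that step, Proposition~\ref{p1:noperiod} and the equilibrium classification give $B_{(0,0)}\setminus S_{(A^{i1},F^{i1})}\subseteq\mathrm{basin}(0,0)$.
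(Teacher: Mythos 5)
Your argument for the component of $B_{(0,0)}$ with $A<A^{i1}$ is correct, and it takes a genuinely different (and cleaner) route than the paper. Where you use the forward-invariant rectangle $(0,A^{i1})\times(0,F^{i1})$ together with Proposition \ref{p1:noperiod} and the classification of equilibria, the paper instead establishes positive invariance of the nullcline-bounded region $\Omega_1=\{N(A)\leq F\leq L(A)\}$ and of $B_{(0,0)}$ itself, and then runs a Lyapunov function $V=A^{\alpha}F^{\beta}$ with $\beta/\alpha=r_a/d_f$, for which $\dot V$ has the sign of $(A-A^{i1})(A^{i2}-A)$ and is therefore negative precisely when $0<A<A^{i1}$. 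Your small-rectangle argument for local asymptotic stability of $(0,0)$ in the case $a>4b\left(\frac{r_c r_a+d_f d_a}{r_a r_f}\right)^2$ is also sound and in fact supplies a step the paper leaves implicit (the paper only addresses local stability explicitly through Theorem \ref{th1:extinction}, i.e., the subcritical case).

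The genuine gap is the one you name yourself: the ``far'' component of $B_{(0,0)}$, i.e., points with $N(A)\leq F<F^{i1}$ and $A>A_{**}>A^{i2}$, where $N(A)$ has fallen back below $F^{i1}$. Your proof stops at ``Granting that step,'' so as a proof of the theorem as literally stated it is incomplete: you have not excluded the scenario in which such an orbit drops below the $F$-nullcline into the region $\{F<N(A)\}$, is pushed back up, crosses $F=L(A)$ at some $A\in(A^{i1},A^{i2})$, and converges to the stable interior equilibrium $(A^{i2},F^{i2})$. To close this you would need either a forward-invariant set trapping such orbits below $F=F^{i1}$ until $A<A^{i1}$, or a separatrix argument locating $W^s(A^{i1},F^{i1})$ above the far component. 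It is worth saying plainly that your diagnosis of the difficulty is accurate and applies equally to the paper: the paper's claimed invariance of $\Omega_1$ and of $B_{(0,0)}$ is only argued (and is only plausible) on the branch with $A\leq A^{i1}$ --- indeed the paper's assertion that $\Omega_1$ contains only the equilibrium $(A^{i1},F^{i1})$ already fails on the far branch, which contains $(A^{i2},F^{i2})$ --- and its Lyapunov inequality $\dot V<0$ is verified only for $0<A<A^{i1}$. So the published proof really establishes the basin inclusion only for the component of $B_{(0,0)}$ with $A<A^{i1}$; your proposal proves exactly the same thing by different means, and honestly flags the remaining component rather than papering over it.
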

\begin{proof}If $a<4b\left(\frac{r_c\,r_a+d_f\,d_a}{r_a\,r_f}\right)^2$, then according to Theorem \ref{th1:extinction}, $(0,0)$ is global stable in $\mathbb R^2_+$, thus it is locally asymptotically stable. Now we need to consider the case that $a>4b\left(\frac{r_c\,r_a+d_f\,d_a}{r_a\,r_f}\right)^2$.

First, we claim that the region defined by 
$$\Omega_1=\left\{(A,F)\in \mathring{\mathbb R}^2_+: \frac{r_f a A^2}{d_f\left(b+ a A^2\right)}- \frac{r_c A}{d_f}\leq F \leq\frac{d_a A}{r_a} \right\}$$
is positively invariant. Assume that this is not true. Then there is some initial condition $\left(A(0), F(0)\right)$ taken in $ \Omega_1$ such that for some future time $T$ such that $\left(A(T), F(T)\right)$ is leaving $\Omega_1$.  From Proposition \ref{p2:equilibria}, we know that the system \eqref{pa1}-\eqref{pf1} has only one equilibrium point $\left(A^{i1},F^{i1}\right)$ in $\Omega_1$, thus due to the continuity of the system, there exists some time $T$ such that we have one of the following two cases:
\begin{enumerate}
\item For all $0<t<T$, $$\frac{r_f a A^2(t)}{d_f\left(b+ a A^2(t)\right)}- \frac{r_c A(t)}{d_f}< F(t) <\frac{d_a A(T)}{r_a};$$ 
at $t=T$,
$$\frac{r_f a A^2(T)}{d_f\left(b+ a A^2(T)\right)}- \frac{r_c A(T)}{d_f}= F(T) \mbox{ and } F(T) <\frac{d_a A(T)}{r_a};$$ and for some $\epsilon>0$ and $T<t<T+\epsilon$, we have
$$\frac{r_f a A^2(t)}{d_f\left(b+ a A^2(t)\right)}- \frac{r_c A(t)}{d_f}> F(t) \mbox{ and } F(t) <\frac{d_a A(T)}{r_a}.$$ 
\item For all $0<t<T$, $$\frac{r_f a A^2(t)}{d_f\left(b+ a A^2(t)\right)}- \frac{r_c A(t)}{d_f}< F(t) <\frac{d_a A(T)}{r_a};$$ 
at $t=T$,
$$\frac{r_f a A^2(T)}{d_f\left(b+ a A^2(T)\right)}- \frac{r_c A(T)}{d_f}< F(T) \mbox{ and } F(T) =\frac{d_a A(T)}{r_a};$$ and for some $\epsilon>0$ and $T<t<T+\epsilon$, we have
$$F(t) >\frac{d_a A(t)}{r_a}.$$ 
\end{enumerate}
If the first case holds, then at time $t=T$ we have
\bea
\frac{dA}{dt}\vert_{t=T}&=&\left(r_a F(T)-d_a A(T)\right)A(T)=0\\
\frac{dF}{dt}\vert_{t=T}&=&\left(\frac{r_f a A^2(T)}{b+ a A^2(T)}-d_f F(T)- r_c A(T)\right)F(T)<0
\eea 
This implies that there exists some small $\epsilon$ such that $A(t)\leq A(T), F(t)< F(T)$ for all $T<t<T+\epsilon$, which {contradicts the} conditions for the first case. Similarly, we can show it is impossible for the second case to be held. Therefore, $\Omega_1$ is positively invariant.

Now we will show that $B_{(0,0)}$ is positively invariant. Define
$$\Omega_2=B_{(0,0)}\setminus\Omega_1\,\,\mbox{ and } \Omega_3=\Omega_1\setminus\left\{\left(A^{i1},F^{i1}\right)\right\}.$$
Then, $\Omega_3$ is also positively invariant since $\left(A^{i1},F^{i1}\right)$ is an equilibrium point and $B_{(0,0)}=\Omega_2\cup\Omega_3$. For any initial condition $\left(A(0), F(0)\right)$ taken in $B_{(0,0)}$, there are the following two cases:
\begin{enumerate}
\item If $\left(A(0), F(0)\right)\in\Omega_3$, then $\left(A(t), F(t)\right)\in\Omega_3$ for all $t>0$ since $\Omega_3$ is positively invariant;
\item If $\left(A(0), F(0)\right)\in\Omega_2$, then either $\left(A(t), F(t)\right)\in\Omega_2$ for all $t>0$ or there exists some $T$ such that
\bea
\frac{dA}{dt}\vert_{t=T}&=&\left(r_a F(T)-d_a A(T)\right)A(T)=0\\
\frac{dF}{dt}\vert_{t=T}&=&\left(\frac{r_f a A^2(T)}{b+ a A^2(T)}-d_f F(T)- r_c A(T)\right)F(T)<0
\eea
This implies that $\left(A(T), F(T)\right)\in\Omega_3$. Since $\Omega_3$ is positively invariant, then $\left(A(t), F(t)\right)\in\Omega_3$ for all $t>T$.
\end{enumerate}
Therefore,  $B_{(0,0)}$ is positively invariant.

Define a Lyapunov function $V=A^\alpha F^\beta: B_{(0,0)}\rightarrow \mathbb R^2_+$ where both $\alpha$ and $\beta$ are positive. Then we have
\bea
\frac{dV}{dt}&=&\alpha A^{\alpha-1} F^\beta\frac{dA}{dt}+\beta A^\alpha F^{\beta-1}\frac{dF}{Fdt}=\alpha A^\alpha F^\beta\left(r_a F-d_a A\right)+ \beta A^\alpha F^\beta\left(\frac{r_f a A^2}{b+ a A^2}-d_f F- r_c A\right)\\&=&V\left[\alpha\left(r_a -\frac{\beta d_f}{\alpha} \right)F+ \beta\left(\frac{r_f a A^2}{b+ a A^2}- \left(r_c+\frac{\alpha d_a}{\beta}\right) A\right)\right]\eea
Choose $\alpha,\beta$ such that  $r_a -\frac{\beta d_f}{\alpha} =0$, i.e., $\frac{\beta}{\alpha}=\frac{r_a}{d_f}$,  then the expression of $\frac{dV}{dt}$ can be simplified as 
\bea
\frac{dV}{dt}&=&V\left[\beta\left(\frac{r_f a A^2}{b+ a A^2}- \left(r_c+\frac{d_f d_a}{r_a}\right) A\right)\right]=\beta V\,A\left[\frac{r_f a A-b \left(r_c+\frac{d_f d_a}{r_a}\right)- a \left(r_c+\frac{d_f d_a}{r_a}\right)A^2}{b+ a A^2}\right]\eea
Define $f(A)=r_f a A-b \left(r_c+\frac{d_f d_a}{r_a}\right)- a \left(r_c+\frac{d_f d_a}{r_a}\right)A^2$, then the sign of $\frac{dV}{dt}$ depends on the sign of $f(A)$. Notice that $f(A)=(A-A^{i1})(A^{i2}-A)$ is negative if $0<A<A^{i1}$ and any point $(A,F)\in B_{(0,0)}$ satisfying $0<A<A^{i1}, 0<F<F^{i1}$.

Since $ B_{(0,0)}$ is positively invariant, {for} any initial condition taken in $B_{(0,0)}$, we have $\frac{dV}{dt}<0$ for all future time. This indicates that 
$A(t)$ and $F(t)$ approach to some fixed point contained in $\overline{B_{(0,0)}}$ (the closure of $B_{(0,0)}$). Notice that $\overline{B_{(0,0)}}$ contains only $(0,0)$ and $(A^{i1},F^{i1})$. According to Theorem \ref{th2:localstability}, $(A^{i1},F^{i1})$ is unstable when $a>4b\left(\frac{r_c\,r_a+d_f\,d_a}{r_a\,r_f}\right)^2$. Then based on Hartman-Grobman Theorem (Robinson 1998), any point in $B_{(0,0)}\setminus S_{(A^{i1},F^{i1})}$ will not approach to $\left(A^{i1},F^{i1}\right)$, {and} therefore, it will approach to $(0,0)$.

Therefore, the statement of Theorem \ref{th3:basinattraction} holds.
\end{proof}

\noindent A direct corollary of Proposition \ref{p1:noperiod}, Theorem \ref{th2:localstability} and Theorem \ref{th3:basinattraction} is as follows:
\begin{corollary}\label{co:two-attractors}If $a>4b\left(\frac{r_c\,r_a+d_f\,d_a}{r_a\,r_f}\right)^2$, then the system \eqref{pa1}-\eqref{pf1} has two attractors 
$$ (0,0)\,\,\mbox{  and  } \left(A^{i2},F^{i2}\right).$$ If the initial condition $\left(A(0),F(0)\right)$ is too small such that it contained in $B_{(0,0)}\setminus S_{(A^{i1},F^{i1})}$ , then $$\lim_{t\rightarrow\infty}\left(A(t),F(t)\right)=(0,0);$$ while the initial condition $\left(A(0),F(0)\right)$ is large enough, then $$\lim_{t\rightarrow\infty}\left(A(t),F(t)\right)= \left(A^{i2},F^{i2}\right).$$
\end{corollary}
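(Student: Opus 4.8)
The plan is to assemble the earlier results and then supply the only missing piece, a ``large initial data'' invariant region. Assume throughout that $a>4b\left(\frac{r_c\,r_a+d_f\,d_a}{r_a\,r_f}\right)^2$. By Proposition \ref{p2:equilibria} the only equilibria of \eqref{pa1}--\eqref{pf1} are $(0,0)$, $(A^{i1},F^{i1})$ and $(A^{i2},F^{i2})$, and by Proposition \ref{p1:noperiod} every trajectory with data in $\mathbb{R}^2_+$ converges to one of these three points. By Theorem \ref{th3:basinattraction} the origin is locally asymptotically stable, and by Theorem \ref{th2:localstability} the point $(A^{i2},F^{i2})$ is locally asymptotically stable while $(A^{i1},F^{i1})$ is a hyperbolic saddle (its Jacobian has trace $<0$ and determinant $<0$, hence real eigenvalues of opposite sign). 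The stable set $S_{(A^{i1},F^{i1})}$ of a planar hyperbolic saddle is, by the stable manifold theorem (and Hartman--Grobman, already invoked above), a one--dimensional $C^1$ curve, hence nowhere dense; in particular it attracts no open set. Therefore the only attractors of the system are $(0,0)$ and $(A^{i2},F^{i2})$, and their basins $S_{(0,0)}$ and $S_{(A^{i2},F^{i2})}$ are open and, together with $S_{(A^{i1},F^{i1})}$, partition $\mathring{\mathbb{R}}^2_+$.

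The first limit assertion is then immediate from Theorem \ref{th3:basinattraction}: if $\left(A(0),F(0)\right)$ is small enough to lie in $B_{(0,0)}\setminus S_{(A^{i1},F^{i1})}$ then $\left(A(t),F(t)\right)\to(0,0)$.

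For the second assertion I would identify a region of ``large'' data contained in $S_{(A^{i2},F^{i2})}$ by building a forward--invariant region $\Omega$ sitting on the other side of the saddle, mirroring the construction of $\Omega_1$ in the proof of Theorem \ref{th3:basinattraction}. Concretely, let $\Omega$ be bounded below by the ray $F=\frac{d_a}{r_a}A$ for $A^{i1}\le A\le A^{i2}$, together with the arc of the $F$--nullcline $F=\frac{r_f a A^2}{d_f\left(b+ a A^2\right)}-\frac{r_c A}{d_f}$ where it lies above that ray, and extended upward without bound. In the interior of $\Omega$ one has $r_aF-d_aA\ge 0$ and $\frac{r_f a A^2}{b+ a A^2}-d_f F- r_c A\ge 0$, so $\frac{dA}{dt}\ge 0$ and $\frac{dF}{dt}\ge 0$ along the flow; combined with the boundedness of the flow (Lemma \ref{l:pb}), every trajectory starting in $\Omega$ converges, and by Proposition \ref{p1:noperiod} its limit is an equilibrium. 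Since $\overline{\Omega}\subset\{A\ge A^{i1}\}$ excludes $(0,0)$, and $S_{(A^{i1},F^{i1})}\cap\Omega$ is confined to a single curve, the limit is $(A^{i2},F^{i2})$ for every datum in $\Omega$ off that curve; as $\Omega$ contains points with $A\in(A^{i1},A^{i2})$ and $F$ arbitrarily large, this furnishes the ``large enough'' initial conditions. An alternative to the monotonicity argument is to reuse the Lyapunov function $V=A^{\alpha}F^{\beta}$ with $\frac{\beta}{\alpha}=\frac{r_a}{d_f}$ from the proof of Theorem \ref{th3:basinattraction}: there $\frac{dV}{dt}=\beta V A\,\frac{f(A)}{b+aA^2}$, and since $f$ is a downward parabola with roots $A^{i1}<A^{i2}$, one has $f(A)>0$ precisely for $A^{i1}<A<A^{i2}$, so $V$ is strictly increasing along any orbit confined to that strip; boundedness plus Proposition \ref{p1:noperiod} then force convergence to $(A^{i2},F^{i2})$.

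The main obstacle is the forward--invariance of $\Omega$ across the $F$--nullcline arc: the hypothesis does not control the monotonicity of $A\mapsto\frac{r_f a A^2}{d_f\left(b+ a A^2\right)}-\frac{r_c A}{d_f}$ on $(A^{i1},A^{i2})$, so a trajectory could cross that arc. One closes this gap by noting it re-enters from the region below the arc, where $\frac{dF}{dt}>0$, and then invoking the absence of limit cycles (Proposition \ref{p1:noperiod}): the orbit still converges to an equilibrium, which within the bounded part of $\Omega$ can only be $(A^{i2},F^{i2})$ or, on a measure--zero set, the saddle. Making the phrase ``large enough'' fully precise — e.g.\ identifying $S_{(A^{i2},F^{i2})}$ with the component of $\mathring{\mathbb{R}}^2_+\setminus S_{(A^{i1},F^{i1})}$ lying on the far side of the saddle's stable manifold — is the only remaining point requiring care.
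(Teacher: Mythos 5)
The paper gives this corollary no independent proof: it is presented as an immediate assembly of Proposition \ref{p1:noperiod} (every orbit converges to an equilibrium), Theorem \ref{th2:localstability} ($\left(A^{i2},F^{i2}\right)$ locally asymptotically stable, $\left(A^{i1},F^{i1}\right)$ a saddle) and Theorem \ref{th3:basinattraction} (local stability of $(0,0)$ and the inclusion of $B_{(0,0)}\setminus S_{(A^{i1},F^{i1})}$ in its basin). Your opening paragraph and your treatment of the first limit assertion reproduce exactly this assembly, including the correct observation that the saddle's stable set is a one--dimensional curve, so the two asymptotically stable equilibria are the only attractors. For that part you match the paper.

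Your additional work on the ``large enough'' clause goes beyond the paper (which leaves it unproved), but the construction of $\Omega$ as written is wrong: the set of points above the ray $F=\frac{d_a}{r_a}A$ \emph{and} above the $F$--nullcline arc, ``extended upward without bound,'' contains points with $F>g(A):=\frac{r_f a A^2}{d_f\left(b+ a A^2\right)}-\frac{r_c}{d_f}A$, where $\frac{dF}{dt}<0$, contradicting your claimed inequality $\frac{r_f a A^2}{b+aA^2}-d_fF-r_cA\ge 0$. The region where both derivatives are nonnegative is the lens \emph{between} the nullclines, $\frac{d_a}{r_a}A\le F\le g(A)$ for $A^{i1}\le A\le A^{i2}$ (the mirror image of $\Omega_1$ on the other side of the saddle), and even its forward invariance through the upper arc needs the monotonicity caveat you raise. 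The Lyapunov fallback is also incomplete on its own: $\frac{dV}{dt}>0$ holds only while the orbit remains in the strip $A^{i1}<A<A^{i2}$, and $V$ increasing does not by itself exclude convergence to the saddle, so you still need the invariant region or the measure--zero stable--manifold argument. Finally, ``large enough'' cannot be read as ``both coordinates large'': for $A(0)$ very large the term $-r_cA$ dominates $\frac{1}{F}\frac{dF}{dt}$ and drives $F$ down, so such data need not lie in $S_{(A^{i2},F^{i2})}$. The defensible content of the clause is precisely what local asymptotic stability already supplies — $S_{(A^{i2},F^{i2})}$ is open and contains a neighborhood of $\left(A^{i2},F^{i2}\right)$ — or, as you propose, the component of $\mathring{\mathbb R}^2_+\setminus S_{(A^{i1},F^{i1})}$ on the far side of the saddle's stable manifold; your instinct that this is the point requiring care is correct, and it is a point the paper itself does not settle.
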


\noindent\textbf{Biological Implications:} Theorem \ref{th3:basinattraction} and Corollary \ref{co:two-attractors} suggests that the initial population of leaf cutter ants and fungus is another important factor that determines whether ants and fungus can coexist or not. If initial population is contained in $B_{(0,0)}\setminus S_{(A^{i1},F^{i1})}$ , then both ants and fungus will {go extinct} even if the division of labor is in a good range, i.e., $a>4b\left(\frac{r_c\,r_a+d_f\,d_a}{r_a\,r_f}\right)^2$.
\section{Numerical simulations, data and sensitive analysis}\label{sec:allee}
In this section, we validate our model \eqref{pa1}-\eqref{pf1} by performing numerical simulations, sensitivity analysis and parameter estimations based on the experiment data. The numerical simulations fit the data very well (see Figure \ref{fig:AFfit} and \ref{fig:AFration}), which suggests that our model \eqref{pa1}-\eqref{pf1} is well defined.  Sensitivity analysis around these chosen parameter values provides information on the governing factors for the ecological process modeled by \eqref{pa1}-\eqref{pf1}. Parameter estimations with different initial guessing values indicate that the population dynamics of leaf cutter ants and their fungus may be unstable at the early stage of colony expansion, as supported by the empirical data (Clark and Fewell in preparation). 
\subsection{Numerical simulations and experimental data}
In this subsection, we compare the numerical simulations of the model  \eqref{pa1}-\eqref{pf1} by using parameter values in certain intervals (see these values in Table \ref{tab2:v_parameters}). These intervals are obtained from the approximations according to data and literature (Brown, Bot and Hart  2006; Clark and Fewell in preparation). Table \ref{tab2:v_parameters} lists the range of parameters and the specific values (i.e., $r_a=0.1,r_f=.7,d_a=0.1,d_f=0.2,b=0.002,r_c=.0045, a=0.2 ,A(6)=0.05,F(6)=0.3$) that generate dashed lines in Figure \ref{fig:AFfit} and \ref{fig:AFration} from week 6 to week 29. Other values in the interval can {generate similar} dynamics as the chosen {values,} but the chosen values {fit the data well.}

\begin{table}[ht]
\begin{center}
\caption{\upshape{Intervals and chosen values of parameters in the system \eqref{pa1}-\eqref{pf1}}}\label{tab2:v_parameters}\vspace{15pt}
\begin{tabular}{|l|l|l|l|}\hline
 Parameters& Intervals&Chosen values \\\hline
 $r_a$: Maximum growth rate of ants&(0.05, 0.3)&0.1\\\hline
 $r_f$: Maximum growth rate of fungus&(0.01,1)&0.7\\\hline
 $r_c$:  Conversion rate between fungus and ants&(0.001,10)&0.0045 \\\hline
 $d_a$: Death rate of ants&(0.001,1)&0.1 \\\hline
 $d_f$: Death rate of fungus &(0.001,1)&0.2\\\hline
  $b$: Half-saturation constant&(0.001,10)&0.002\\\hline
  $a$: Measurement of the division of labor& (0, 0.25)&0.2\\\hline
    $A(6)$:Biomass of ants at week 6& (0.001, 0.1)&0.05\\\hline
      $F(6)$:Biomass of ants at week 6& (0.001, 1)&0.3\\\hline
\end{tabular}
\end{center}
\end{table}

Figure \ref{fig:AFfit} and Figure \ref{fig:AFration} provide the comparison between ecological data (solid lines with error bars) and simulations (dashed lines) generated by the model \eqref{pa1}-\eqref{pf1} when $r_a=0.1,r_f=.7,d_a=0.1,d_f=0.2,b=0.002,r_c=.0045, a=0.2 ,A(6)=0.05,F(6)=0.3$. 
\begin{enumerate}

\item In Figure \ref{fig:AFfit}, the left figure is the biomass of Ants {v.s.} Time in weeks and the right figure is the biomass of Fungus v.s. Time in weeks. By comparison, we can see that the simulations fit the data very well, especially for biomass of fungus. Overall, the simulation of the biomass of ants is larger than the experimental data (the right figure). This is expected, because the equation \eqref{pa1} models the biomass of all ants including the queen, eggs, larvae, pupae and workers, while the experiment only measures the biomass of workers. In addition, both data and simulations suggest that ants have exponential growth while the fungus has linear-like growth from weeks 6-29. Recall that our focus is the ergonomic growth stage of the ants which starts when the first workers appear. The exponential growth of ants at this growth stage confirms the study by Oster and Wilson (1978).
\begin{figure}[ht]
\begin{center}
\includegraphics[width=160mm]{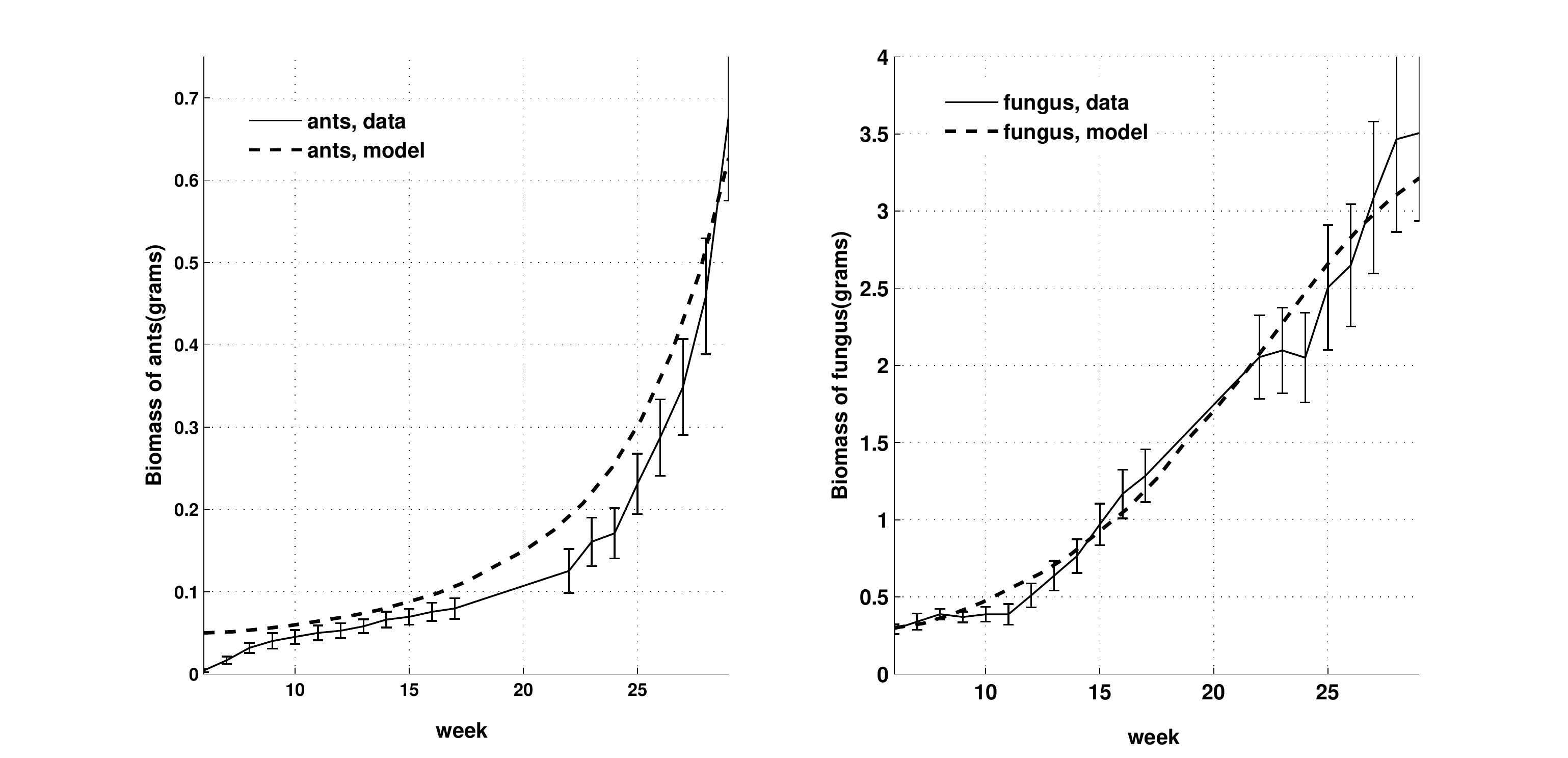}
 \caption{The solid lines with error bars are from data and the dashed lines are simulations generated from the model \eqref{pa1}-\eqref{pf1} when $r_a=0.1,r_f=.7,d_a=0.1,d_f=0.2,b=0.002,r_c=.0045, a=0.2 ,A(6)=0.05,F(6)=0.3$. The left figure is the biomass of Ants v.s Time in weeks and the right figure is the biomass of Fungus v.s. Time in weeks.}
 \label{fig:AFfit}
\end{center}
  \end{figure}
\item The right figure of Figure \ref{fig:AFration} represents  $\log_{10}(\mbox{the biomass of ants+1})$ v.s.  $\log_{10}(\mbox{the biomass of fungus+1})$, which provides the information on the relationship between the growth rate of ants and the growth rate of fungus. Simulations fit data extremely well. Both {suggest that the growth rate of ants and fungus increase over time,} and the growth rate of ants increases faster than fungus, which may be caused by changes in the efficiency of the conversion between ants and fungus at the early colony stage.

The left figure of Figure \ref{fig:AFration} is the ratio of $\log_{10}(\mbox{the biomass of ants+1})$ to $\log_{10}(\mbox{the biomass of fungus+1})$ v.s. time in weeks, which provides information on the relative growth rate of ants to fungus: the simulation fits data very well from week 10 to week 29 but shows some inconsistency between the data and the model fitting during week 6 to week 9. In this case the model is a more accurate descriptor of population dynamics than the collected data because the biomass of the ants during this time {consists} almost entirely of immature workers or ants in the larvae/pupae stage. The data do not account for this ant biomass and {thus, } from week 6 to week 9 the ant population may be largely underrepresented. This under-representation leads to an increase in the slope of data when the actual result should be closer to the model output during this time. {Thus,} the possible explanations for the inconsistency between the data and the model fitting during week 6 to week 9 can be summarized as follows: 1. The equation \eqref{pa1} models the biomass of all ants while the experiment only measures the biomass of {workers; thus,} the model \eqref{pa1}-\eqref{pf1} should generate the larger ratio of $\log_{10}(\mbox{the biomass of ants+1})$ to $\log_{10}(\mbox{the biomass of fungus+1})$; 2. For the first few weeks (week 6-9), the real population dynamics of ants and fungus are highly unstable and may have very different ecological properties than our model assumptions. Stochastity and multiple life stages of leaf cutter ants (e.g., eggs, larvae, pupae) may be considered in future models.

 \begin{figure}[ht]
 \begin{center}
   \includegraphics[width=160mm]{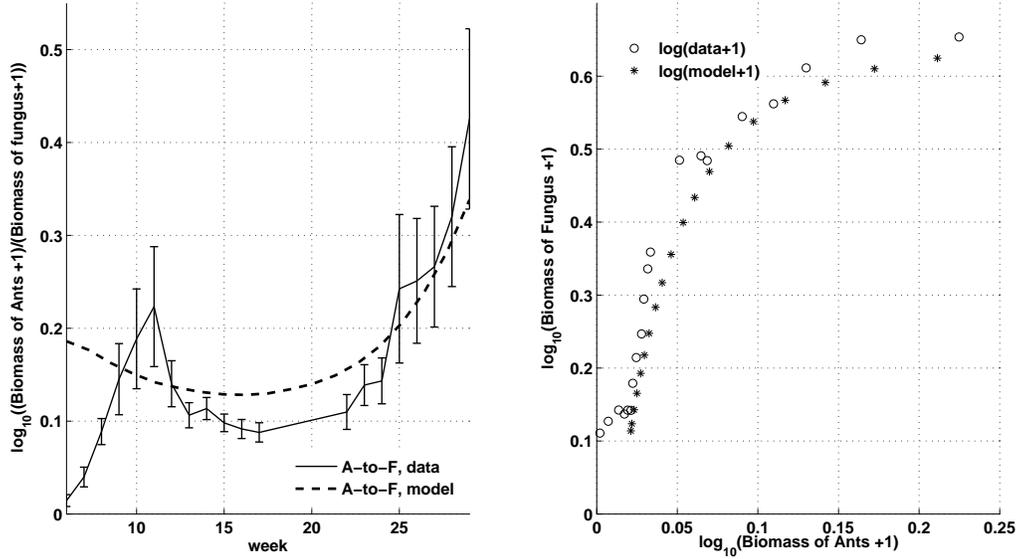}
\caption{The solid lines with error bars are from data and the dashed lines are simulations generated from the model \eqref{pa1}-\eqref{pf1} when $r_a=0.1,r_f=.7,d_a=0.1,d_f=0.2,b=0.002,r_c=.0045, a=0.2 ,A(6)=0.05,F(6)=0.3$. In the left figure, the x-axis represents time in weeks and the y-axis represents the ratio of $\log_{10}(\mbox{the biomass of ants+1})$ to $\log_{10}(\mbox{the biomass of fungus+1})$; while in the right figure, the x-axis represents represents the value of $\log_{10}(\mbox{the biomass of ants+1})$ and the y-axis represents the value of $\log_{10}(\mbox{the biomass of fungus+1})$}
 \label{fig:AFration}
\end{center}\end{figure}
\end{enumerate}

The fitting of the model to the data for the ants and fungus population is evident from {Figures} \ref{fig:AFfit} and \ref{fig:AFration}. The comparison between simulations generated by the model \eqref{pa1}-\eqref{pf1} and data suggests not only that the fit of the model to data is accurate but also that parameters match with expected values for growth, death, and division of labor. A recent experiment study by Clark and Fewell (in preparation) on leaf-cutter ants shows that the parameter values for $r_a, r_f, d_f$ that generate {Figures} \ref{fig:AFfit} and \ref{fig:AFration} are very close to actual data. The initial condition used for generating {Figures} \ref{fig:AFfit} and \ref{fig:AFration} is the mean value from the experimental data by Clark \emph{et al} (preprint). A study by Brown, Bot and Hart (2006) on mortality rates of leaf cutter ants and division of labor suggests that the death rate matches with the parameter $d_a$.  Notice that the values of parameters such as $r_c, b$ are difficult to measure in the experiments. The good fit of the model to data (see {Figures} \ref{fig:AFfit} and \ref{fig:AFration}) generated by the values listed in Table \ref{tab2:v_parameters} provides an approximation of $r_c$ and $b$. In the following subsection, we will examine the sensitivity of these parameter values and the initial condition. The parameter $d_f$ is difficult to measure experimentally because of the efficiency of the mutualistic relationship between the ants and fungus.

The leaf-cutter ants cultivate and consume the fungus inside underground gardens, so data collection in the field ultimately results in the death of the colony. Our data measures of fungal biomass include estimates of fungal loss from waste piles; however, {calculating plant waste underestimates the death rate of the fungus}, because much of the fungus is consumed or used for other purposes inside the {colony. By the time} the fungus biomass has been disposed of, it is only a fraction of its original {biomass}, making any measurement of its mass/area inaccurate. {Additionally, actual consumption of the fungus cannot be easily observed in a field or laboratory setting}.

\subsection{Sensitivity analysis}
 Input factors for our mathematical model \eqref{pa1}-\eqref{pf1} consist of seven parameters and two initial conditions for independent and dependent variables of the model. Because of natural variation, error in measurements, or simply a lack of current techniques to measure some parameters, it is necessary to perform sensitivity analysis to identify critical inputs (parameters and initial conditions) of our model and quantifying how input uncertainty impacts model outcomes (i.e., the dynamics of the ants and fungus biomass of ants $A(t), F(t)$).  In this subsection, sensitivity measure of the model \eqref{pa1}-\eqref{pf1}  is computed numerically by performing multiple simulations varying input factors around the nominal value listed in Table \ref{tab2:v_parameters}.

Sensitivity analysis for parameters and initial conditions were performed using an extension of the MATLAB function ODE23tb, a stiff solver for ordinary differential equations. The extension maintains the same calling sequences as ODE23tb; it differs only in the algorithm used to compute the derivatives.
The algorithms used in this instance are the internal numerical differentiation and iterative approximation based on directional derivatives methods described by H.G. Bock (1981) and T. Maly with L. R. Petzold (1996) respectively. The output of the function is similar to that of ODE23tb with an additional array containing the derivatives (sensitivities) of the solution with respect to a given parameter vector. The sensitivity of all parameters and initial conditions around $r_a=0.1,r_f=.7,d_a=0.1,d_f=0.2,b=0.002,r_c=.0045, a=0.2 ,A(6)=0.05,F(6)=0.3$ from week 6 to week 29 are shown in Figure \ref{fig:ra_rf}-\ref{fig:A0_F0}. 
 \begin{figure}[ht]
 \begin{center}
   \includegraphics[width=160mm]{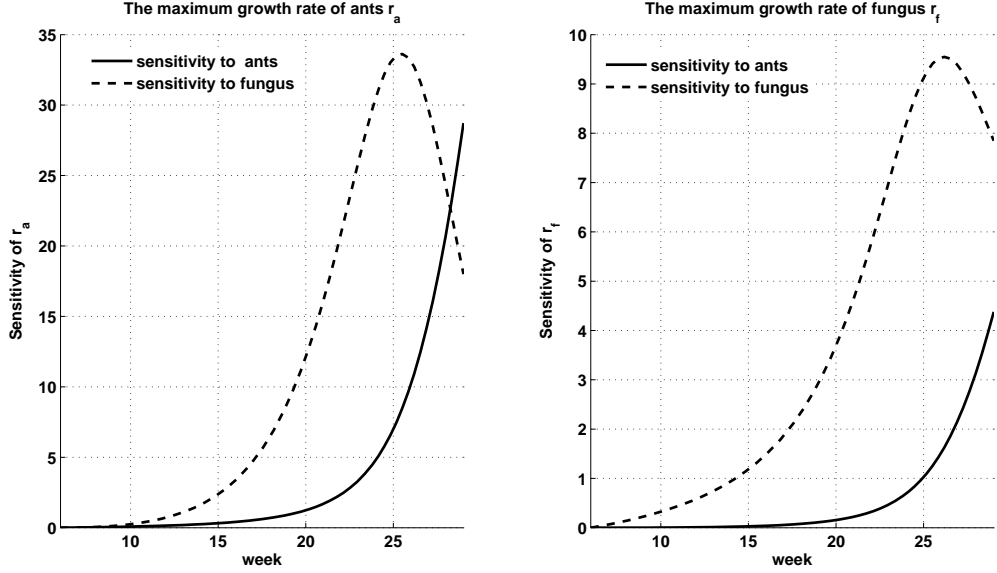}
\caption{The left figure is the sensitivity of the maximum growth rate of ants $r_a$ and the right figure is the sensitivity of the maximum growth rate of fungus $r_f$ for the model \eqref{pa1}-\eqref{pf1} around the chosen values where $r_a=0.1,r_f=.7,d_a=0.1,d_f=0.2,b=0.002,r_c=.0045, a=0.2 ,A(6)=0.05,F(6)=0.3$.}
 \label{fig:ra_rf}
\end{center}\end{figure}

\begin{figure}[ht]
 \begin{center}
   \includegraphics[width=160mm]{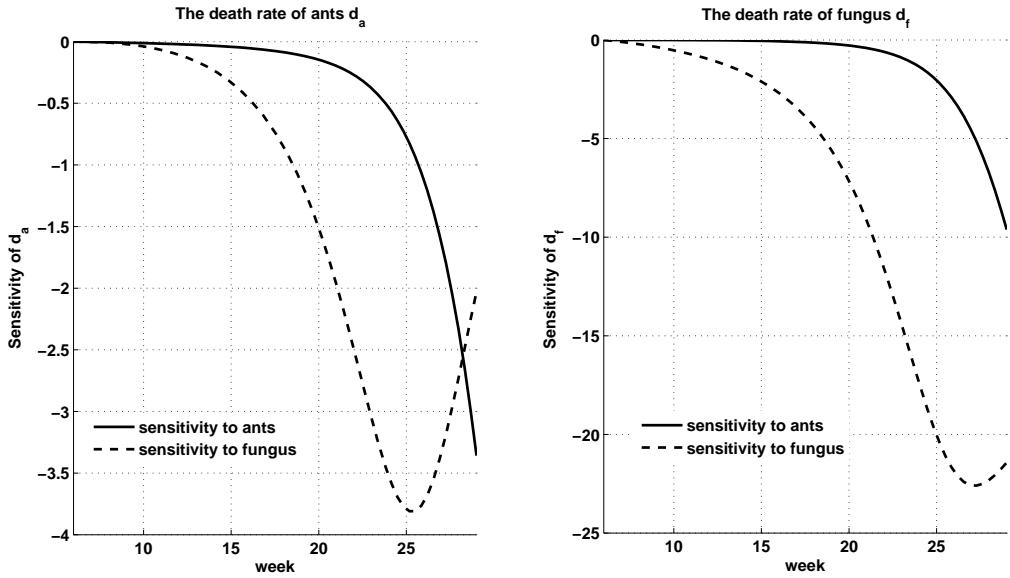}
\caption{The left figure is the sensitivity of the death rate of ants $d_a$ and the right figure is the sensitivity of the death rate of fungus $d_f$ for the model \eqref{pa1}-\eqref{pf1} around the chosen values where $r_a=0.1,r_f=.7,d_a=0.1,d_f=0.2,b=0.002,r_c=.0045, a=0.2 ,A(6)=0.05,F(6)=0.3$.}
 \label{fig:da_df}
\end{center}\end{figure}
\begin{figure}[ht]
 \begin{center}
   \includegraphics[width=160mm]{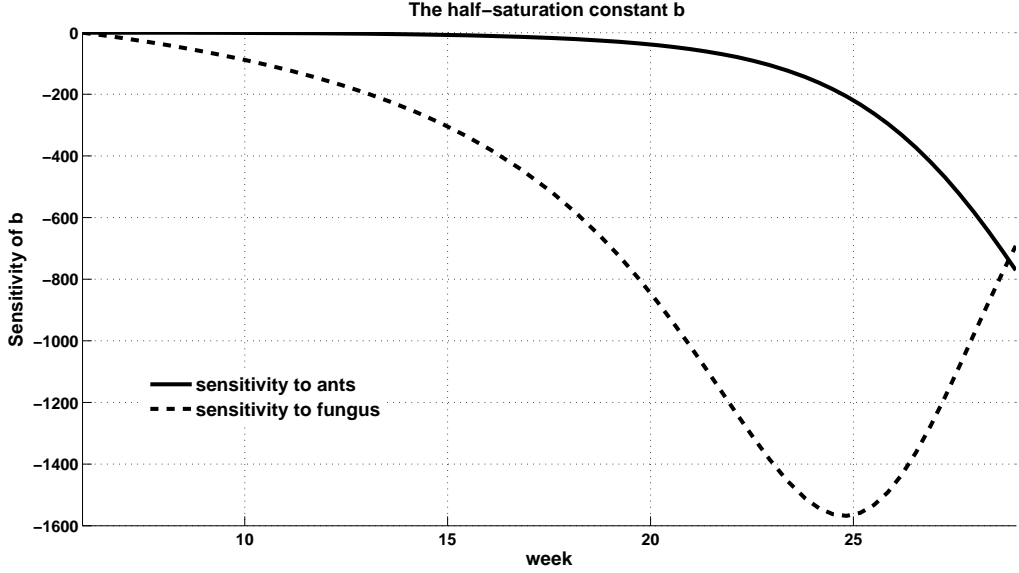}
\caption{The sensitivity of the half-saturation constant $b$ for the model \eqref{pa1}-\eqref{pf1} around the chosen values where $r_a=0.1,r_f=.7,d_a=0.1,d_f=0.2,b=0.002,r_c=.0045, a=0.2 ,A(6)=0.05,F(6)=0.3$.}
 \label{fig:b}
\end{center}\end{figure}
\begin{figure}[ht]
 \begin{center}
   \includegraphics[width=160mm]{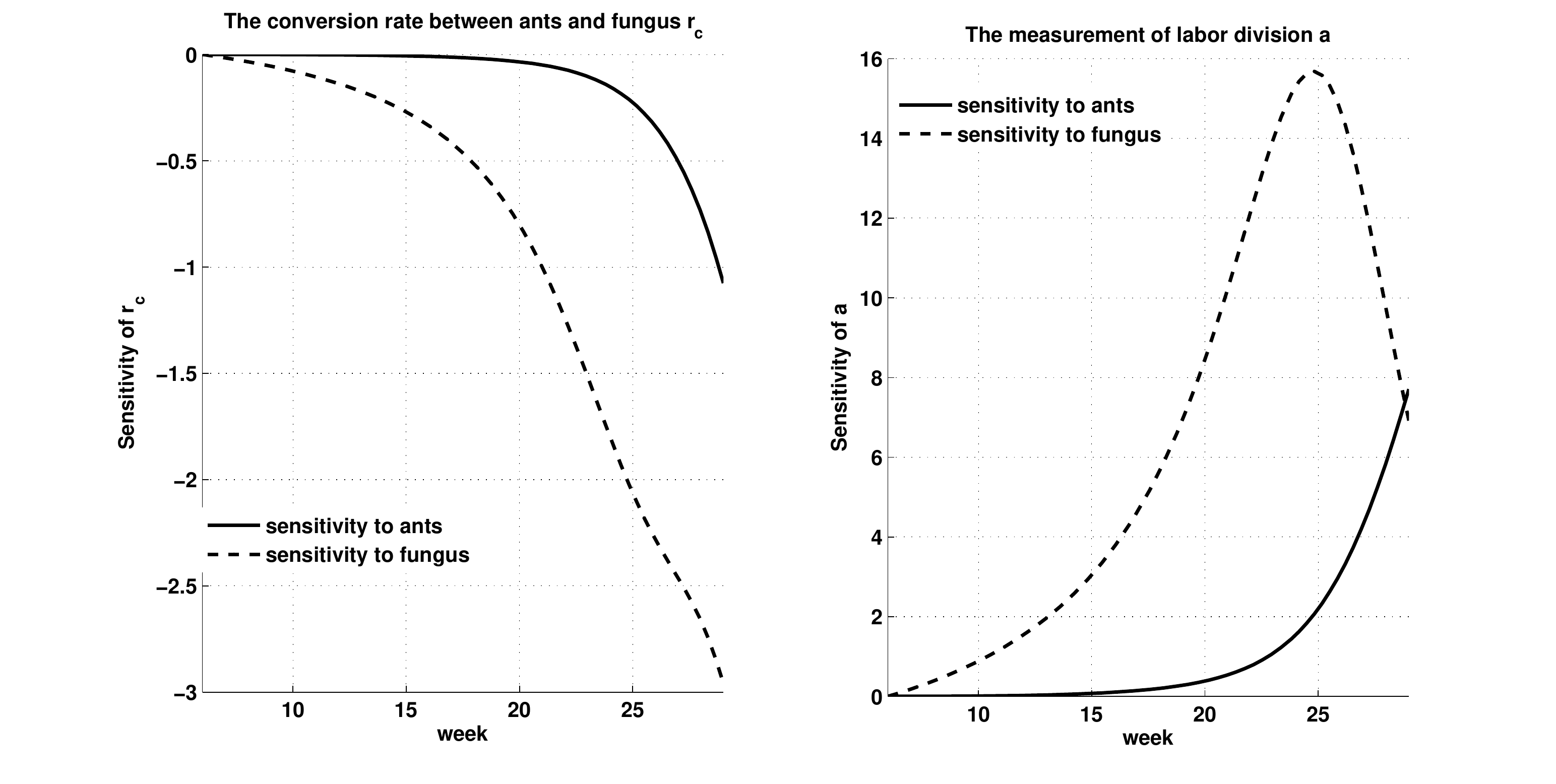}
\caption{The left figure is the sensitivity of the conversion rate between ants and fungus $r_c$ and the right figure is the sensitivity of the measure of the division of labor of ants $a$ for the model \eqref{pa1}-\eqref{pf1} around the chosen values where $r_a=0.1,r_f=.7,d_a=0.1,d_f=0.2,b=0.002,r_c=.0045, a=0.2 ,A(6)=0.05,F(6)=0.3$.}
 \label{fig:rc_a}
\end{center}\end{figure}
\begin{figure}[ht]
 \begin{center}
   \includegraphics[width=160mm]{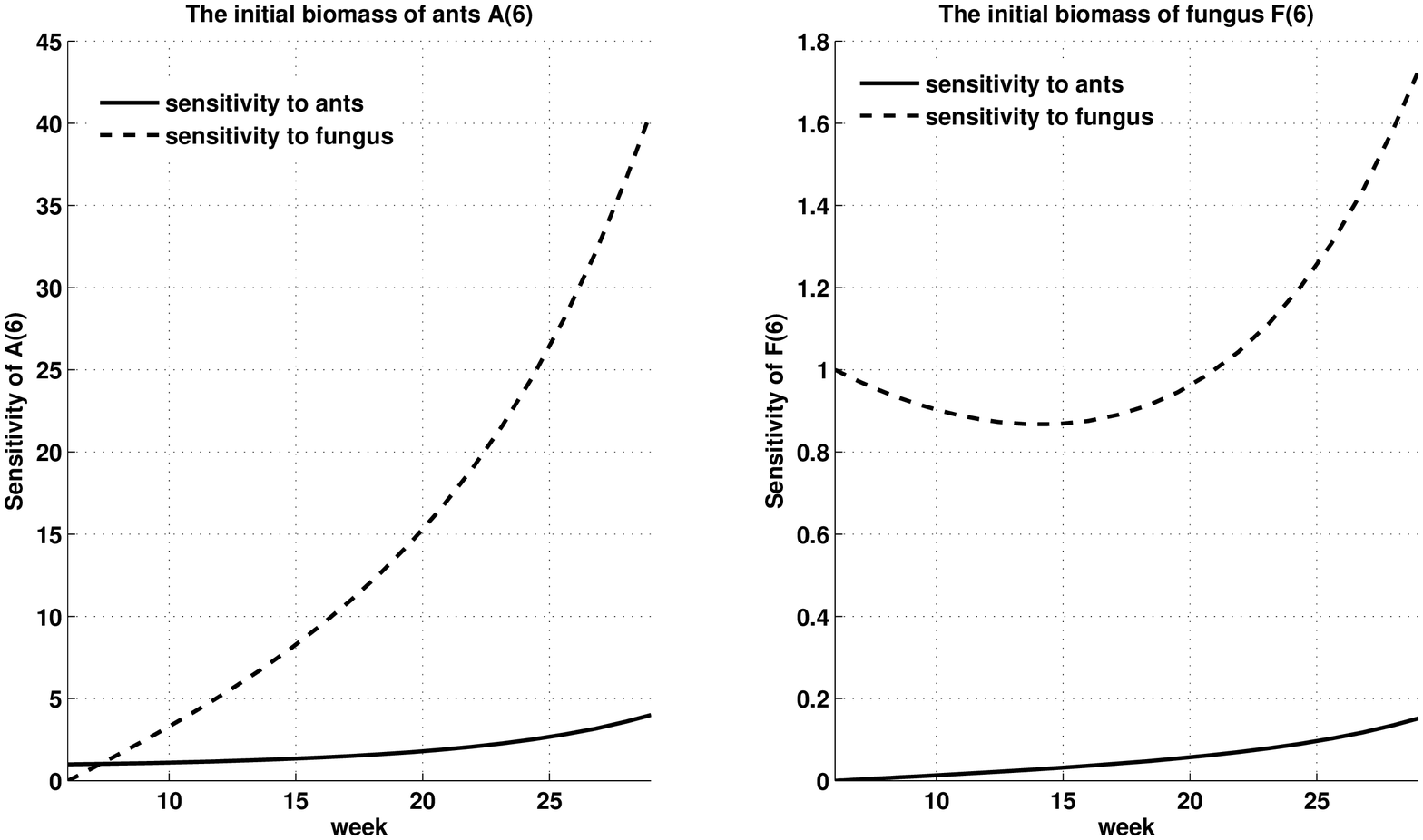}
\caption{The left figure is the sensitivity of the biomass of ants in week 6 $A(6)$ and the right figure is the sensitivity of the maximum growth rate of fungus $r_f$ for the model \eqref{pa1}-\eqref{pf1} around the chosen values where $r_a=0.1,r_f=.7,d_a=0.1,d_f=0.2,b=0.002,r_c=.0045, a=0.2 ,A(6)=0.05,F(6)=0.3$.}
 \label{fig:A0_F0}
\end{center}\end{figure}

 As we would expect, the effects of all the parameters are strictly cumulative. The effects of each parameter at the beginning of the colony are relatively small. As time progresses the parameters have a much greater effect on the model, most of them having their largest effect on the biomass of fungus at around week 25. There is a time shift between the effect on the biomass of ants and the biomass of fungus: the largest effect on the biomass of fungus is always earlier than the largest effect on the biomass of ant. In addition, all the parameters have larger effects on the biomass of fungus than on the biomass of ants. The detailed results on sensitivity analysis for each parameter and the initial condition can be summarized as follows:
\begin{enumerate}
\item Figure \ref{fig:ra_rf} shows that the growth parameters $r_a$ and $r_f$ have a positive effect on the model as a function of time; as the model progresses, $r_a$ and $r_f$ have a larger effect on the increase of both biomass of ants and fungus. The largest effect of both $r_a$ and $r_f$ on biomass of fungus occurs at week {25,} while the effect of  both $r_a$ and $r_f$ on biomass of {ants} is an increasing function of time and achieves the largest effect at the end of experiment, i.e., week 29.
\item Figure \ref{fig:da_df} shows that the death parameters $d_a$ and $d_f$ have a negative effect on the model as a function of time. As the model progresses forward in time the decrease in the biomass of ants and fungus respectively is increasingly affected by parameters $d_a$ and $d_f$. The largest effect of $r_a$ and $r_f$ on biomass of fungus occurs at week 25 and week 26 respectively while the effect of both $d_a$ and $d_f$ on biomass of ants is an increasing function of time and achieves the largest effect at the end of experiment, i.e., week 29.

\item Figure \ref{fig:rc_a} shows the conversion rate between ants and fungus $r_c$ has a negative effect on the output of the model while the measurement of the division of labor of ants has a positive effect. The effect of $r_c$ on the biomass of both ants and fungus is a decreasing function of time while the division of labor parameter $a$ shows very similar behavior to the growth parameters $r_a$ and $r_f$; this suggests that by maximizing the {efficiency of division of labor}, both the ants' and fungus's population growth will be maximized. This agrees with article main purpose number 2. 

\item Figure \ref{fig:A0_F0} shows that both initial condition $A(6)$ and $F(6)$ have positive effects on the model as a function of time. The effect of $A(6)$ on biomass of both ants and fungus is an increasing function of time. The effect of $F(6)$ on the biomass of ants is an increasing function of time while the effect on the biomass of fungus is decreasing until week 15 and then increasing until week 29.

\item Figure \ref{fig:b} shows that the half-saturation constant $b$ has a negative effect on the model as a function of time. As the model progresses forward in {time}, the decrease in biomass of ants and fungus respectively is increasingly affected by $b$. The largest effect of $b$ on fungal biomass occurs at week 25 while the effect of $b$ on biomass of {ants is} an increasing function of time and achieves the largest effect at the end of experiment, i.e., week 29. Notice that $b$ has the largest sensitivity among all the parameters and initial {conditions.}
\end{enumerate}

In conclusion, the growth parameters $r_a, r_f$, the division of labor parameter $a$ and the initial conditions $A(6), F(6)$ have positive effects on the biomass of ants and fungus while the death parameters $d_a, d_f$, the conversion rate $r_c$ and the half-saturation constant $b$ have negative effects on the biomass of ants and fungus. Among all these parameters and the initial conditions, the parameter $b$ has the largest sensitivity and the conversion rate $r_c$, the death rate of ants $d_a$ and the initial value of fungus $F(6)$ have relative small sensitivity to the output of the model.





\subsection{Parameter estimations}
In this subsection, we use Nonlinear grey-box models (System Identification Toolbox provided in MATLAB) to perform parameter estimations based on experimental data by fixing the estimated interval of $a$ to be $(0, 0.25)$ and the estimated intervals of other parameters to be $(0,\infty)$. The estimated values can be varied depending on the initial guesses and the estimated intervals. Table \ref{tab3:v_parameters} lists the best estimated values and their standard deviations when the initial guessing are the values that generated {Figures} \ref{fig:AFfit} and \ref{fig:AFration}. Table \ref{tab4:v_parameters} lists the best estimated values and their standard deviations when the initial guesses are different from the one's listed in Table \ref{tab3:v_parameters}.

\begin{table}[ht]
\begin{center}
\caption{\upshape{Parameter estimations of the system \eqref{pa1}-\eqref{pf1} case one}}\label{tab3:v_parameters}\vspace{15pt}
\begin{tabular}{|c|c|c|c|c|}\hline
 Parameters& Initial values&Intervals&Estimated values & Standard Deviation\\\hline
 $r_a$&0.1&$(0, \infty)$&0.0436&0.0949\\\hline
 $r_f$&0.7&$(0, \infty)$&1.1667&4.3344\\\hline
 $r_c$&0.0045&$(0, \infty)$&4.9781e-12&3.5671 \\\hline
 $d_a$&0.1&$(0, \infty)$&1.5144e-07&0.6700 \\\hline
 $d_f$&0.2&$(0, \infty)$&0.2707&0.3035\\\hline
  $b$&0.002&$(0, \infty)$&0.0050&34.2259\\\hline
  $a$&0.2& $(0, 0.25)$&0.1656&1140.99\\\hline
    $A(6)$&0.05& $(0, \infty)$&0.0988&DNE\\\hline
      $F(6)$&0.3&$(0, \infty)$&0.1860&DNE\\\hline
\end{tabular}
\end{center}
\end{table}

\begin{table}[ht]
\begin{center}
\caption{\upshape{Parameter estimations of the system \eqref{pa1}-\eqref{pf1} case two}}\label{tab4:v_parameters}\vspace{15pt}
\begin{tabular}{|c|c|c|c|c|}\hline
 Parameters& Initial values&Intervals&Estimated values & Standard Deviation\\\hline
 $r_a$&0.075&$(0, \infty)$&0.0747&0.0230\\\hline
 $r_f$&0.15&$(0, \infty)$&0.1585&0.2055\\\hline
 $r_c$&0.0001&$(0, \infty)$&9.9447e-05& 0.3906\\\hline
 $d_a$&0.0001&$(0, \infty)$&3.1570e-07&0.5807 \\\hline
 $d_f$&0.03&$(0, \infty)$&0.0292&0.1061\\\hline
  $b$&0.00001&$(0, \infty)$&8.2187e-05&33.6161\\\hline
  $a$&0.2& $(0, 0.25)$&0.2004&81819.3\\\hline
    $A(6)$&0.05& $(0, \infty)$&0.0498&DNE\\\hline
      $F(6)$&0.3&$(0, \infty)$&0.2995&DNE\\\hline
\end{tabular}
\end{center}
\end{table}

The comparison between Table \ref{tab3:v_parameters} and Table \ref{tab4:v_parameters} can be summarized as follows:
\begin{enumerate}
\item The different initial guesses values will give different estimated values. This may be caused by the fact that the model \eqref{pa1}-\eqref{pf1} has multiple attractors. 
\item  The estimated values of $r_c$ and $d_a$ are both extremely small. In addition, the smaller the initial guesses of $d_a$ and $r_c$, the smaller the estimated values of these parameters. This may {suggest} that $r_c$ and $d_a$ have little effect on the population dynamics of ants and fungus at the early colony stage, which has been confirmed by their small sensitivity (see Figure \ref{fig:da_df} and \ref{fig:rc_a}).
\item   The standard deviations of estimated $a,b,r_c, d_a$ are extremely large, which may be caused by two factors: 1. These parameters are not independent; 2. The extremely small value of $d_a$ and $r_c$. 
\end{enumerate}
The summary above indicates that the population dynamics of ants and fungus may be highly unstable at the early stage of colony development. Notice that collected data is from the successful colonies only. The extremely small estimated value of $r_c$ and $d_a$ may suggest that the conversion rate between ants and fungus and the death rate of ants are not as important as other factors such as the growth rate parameters $r_a, r_f$ and the death rate of fungus $d_f$.
Possibly, a multiple-stage model that includes the stages of eggs, larvae, pupae or even a stochastic model should be introduced in order to get a better understanding of the detailed ecological processes.

\section{Conclusion}\label{sec:sim}
In this article, we develop a simple mathematical model \eqref{pa1}-\eqref{pf1} to study mutualism interactions between leaf cutter ants and their fungus garden  at the early colony stage with the following unique features
\begin{enumerate}
\item The net benefit of the obligate fungus to leaf cutter ants is determined by the difference between the overall performance of collecting leaves and cultivating fungus by worker ants and the amount of fungus eaten by queen, larvae and workers; while the net benefit of obligate ants to fungus is determined by the difference between the amount of consumed fungus and the mortality rate due to the energy spent on collecting leaves and cultivating fungus. 
\item The division of labor of leaf cutter ants: workers perform different tasks to maintain their fungus gardens. This feature allows us to apply the concept of the kinetics of functional response to model the numerical functional response of fungus.
\end{enumerate}

 The mathematical analysis of \eqref{pa1}-\eqref{pf1} gives the completed global dynamics of the model (Theorem \ref{th1:extinction}, \ref{th2:localstability}, \ref{th3:basinattraction} and Corollary \ref{co:two-attractors}). These theoretical results suggests that: 1. The division of labor of ants can determine whether leaf cutter ants and their fungus garden are able to coexist; 2. When the division of labor is in a good range, the initial populations of leaf cutter ants and fungus are needed to be larger than some threshold in order to coexist.

 Finally, we validated the model \eqref{pa1}-\eqref{pf1} using empirical data. The comparison between model simulations and data supports the fact that \eqref{pa1}-\eqref{pf1} is well defined for modeling the population dynamics of the leaf cutter ants and fungus during the incipient colony stage (the early ergonomic growth stage). The good fit between the model and data also provides us an approximation of the values of difficult measured parameters such as the conversion rate between fungus and ants $r_c$ and the half-saturation constant $b$. Sensitivity analysis implies $b$ has the largest effect on the output of the model. Both sensitivity analysis and parameter estimations suggest that the population dynamics of early stage colonies may not be stable and the growth rate parameters $r_a, r_f$ and the death rate of fungus $d_f$ are important factors for determining the population dynamics for the successful colony.

The inconsistency between the data and the model fitting during week 6 to week 9 (Figure \ref{fig:AFration}) suggests that a more realistic and detailed model is needed during this period. Thus, consideration of multiple life cycle stages, including eggs, larvae and pupae, or even stochastity should be included in further modeling work. In addition, for future experiments, the biomass of, larvae and pupae should be measured as well. These different life cycle stages are likely to represent a larger proportion of the total ant biomass in early stages of colony growth and additionally likely have more variance.  This could be our future work.




\section*{Acknowledgement}

The authors would like to thank Dr. Dieter Armbruster for initiating the mathematical modeling on the interaction between leaf cutter ants and their fungus gardens.


\bibliographystyle{elsarticle-harv}

\end{document}